\documentclass[11pt]{amsart}
\usepackage{amssymb}
\usepackage{amsmath}
\newtheorem{theorem}{Theorem}[section]
\newtheorem{lemma}[theorem]{Lemma}

\newtheorem{definition}[theorem]{Definition}

\newtheorem{proposition}[theorem]{Proposition}

\newtheorem{lem-def}[theorem]{Lemma-Definition}

\def\as#1{\renewcommand\arraystretch{#1}}

\def\a1x{A_1[x]}
\def\bb{\mathcal{B}}

\def\eps{\epsilon}

\def\gl#1#2{\op{GL}_{#1}(#2)}

\def\iso{\ \lower .6ex\hbox{$\stackrel{\lra}{\mbox{\tiny $\sim\,$}}$}\ }

\def\lra{\longrightarrow}
\def\m{\mathfrak{m}}

\def\mn{\operatorname{Min}}
\def\mx{\operatorname{Max}}

\def\op{\operatorname}

\def\p{\mathfrak{p}}

\def\q{\mathfrak q}
\def\Q{\mathbb Q}

\def\rd{\operatorname{red}}
\def\rr{\mathcal{R}}
\def\t{\theta}

\def\ut{\op{UT}_n(A)}
\def\Z{\mathbb Z}

\newtheorem{alg}[theorem]{Algorithm}
\newlength{\alginputwidth}
\setlength{\alginputwidth}{\textwidth}
\newlength{\algtmp}
\settowidth{\algtmp}{\rm Output: \ }
\addtolength{\alginputwidth}{-\algtmp}
\settowidth{\algtmp}{\mbox{\enspace} }
\addtolength{\alginputwidth}{-\algtmp}



\title[Reduced normal form of local integral bases]{Reduced normal form of local integral bases}

\author{Nath\'alia Moraes de Oliveira}
\address{Departament de Matem\`{a}tiques,
         Universitat Aut\`{o}noma de Barcelona,
         Edifici C\\ E-08193 Bellaterra, Barcelona, Catalonia, Spain}
\email{noliveira@mat.uab.cat}

\author{Enric Nart}

\address{Departament de Matem\`{a}tiques,
         Universitat Aut\`{o}noma de Barcelona,
         Edifici C\\ E-08193 Bellaterra, Barcelona, Catalonia, Spain}
\email{nart@mat.uab.cat}

\keywords{integral basis; reduced normal form}

\thanks{Partially supported by CNPq 204224$\mid$2104-4 from the Conselho Nacional de Desenvolvimento Cient\'\i fico e Tecnol\'ogico, and MTM2013-40680 from the Spanish MEC}

\makeatletter
\@namedef{subjclassname@2010}{%
\textup{2010} Mathematics Subject Classification}

\subjclass[2010]{Primary 11R04; Secondary 11Y40}

\begin{document}
\begin{abstract}
We introduce a canonical form for reduced bases of integral closures of discrete valuation rings, and we describe an algorithm for computing a basis in reduced normal form.
This normal form has the same applications as the Hermite normal form: identification of isomorphic objects, construction of global bases by patching local ones, etc.
but in addition the bases are reduced, which is a crutial property for several important applications. Except for very particular cases, a basis in Hermite normal form cannot be reduced.   
\end{abstract}
\maketitle

\section{Introduction}
Let $k$ be a field and $x$ an indeterminate.
The first approach to a theory of lattices over the polynomial ring $k[x]$ goes back to Mahler \cite{mahler}. In \cite[\S16]{HLenstra}, H. Lenstra gave a brief sketch of the essential features of the theory, which has been developed in full scope by Bauch \cite{bauch2}. 

The role of the norm determined by a quadratic positive definite form, in the  classical theory of lattices over $\Z$, is undertaken by a certain \emph{length function} $d$ defined over a finite dimensional vector space over $k(x)$. For the vector space underlying a finite field extension $L/k(x)$, we can consider:
$$
d\colon L^*\longrightarrow \Q,\quad d(\alpha)=-\mn\{w_i(\alpha)\mid 1\le i\le t\}, 
$$
where $w_1,\dots,w_t$ are the valuations on $L$ extending the valuation $v_\infty$ on $k(x)$, characterized by $v_\infty(a)=-\deg(a)$ for any polynomial $a\in k[x]$. In this way, $d$ is a kind of extension of the degree function on $k[x]$.

A relevant concept is that of \emph{reduced basis} of a lattice with respect to the given length function. W. M. Schmidt used reduced bases of integral closures of certain subrings of function fields of curves over  finite fields, as a crutial tool for the design of algorithms to compute bases of the Riemann-Roch spaces  attached to divisors of the curve \cite{schmidt,schoernig,hess,bauch2}.

In this paper, we study reduced bases of integral closures of arbitrary discrete valuation rings. 

Let $A$ be a discrete valuation ring with field of fractions $K$. 
Let $L/K$ be a finite field extension, and $B$ the integral closure of $A$ in $L$, which we suppose to be finitely generated as an $A$-module. Let $v$ be the valuation on $A$ and $w_1,\dots,w_t$ the valuations on $L$ extending $v$.
The notion of reduced families of elements in $L$ with respect to the function 
$$
w\colon L^*\longrightarrow \Q,\quad w(\alpha)=\mn\{w_i(\alpha)\mid 1\le i\le t\}
$$ was already introduced in \cite{bases} as a tool to prove that certain families of integral elements constitute an $A$-basis of $B$.   

In section \ref{secRed}, we develop in a more comprehensive way the properties of reduced families in this general context. In Theorem \ref{distribution} we compute the multiset of $w$-values of a reduced integral basis, which turns out to be independent of the basis. Also, in Theorem \ref{transition} we find the structure of the transition matrices between reduced integral bases.

In section \ref{secTriang}, we present a triangulation routine to convert a given reduced integral basis into a triangular one, without destroying reduceness. This has many practical applications. For any task involving the previous computation of a reduced integral basis (like the computation of Riemann Roch spaces of function fields)  the computational cost is diminished if we use a triangular reduced integral basis. Specially, triangular integral bases facilitate the computation of global integral bases by patching local ones, with the aid of the chinese remainder theorem.

In section \ref{secRNF} we introduce a normal form for triangular reduced integral bases. 
Finally, in section \ref{secComp} we discuss some computational issues concerning the computation or integral bases in reduced normal form, and we exhibit a concrete example.

\section{Reduced integral bases}\label{secRed}
Let $v\colon K\to \Z\cup\{\infty\}$ be a discrete valuation on a field $K$. Let $A$ be the valuation ring, $\pi\in A$ an uniformizer, $\m=\pi A$ the maximal ideal of $A$, and $k=A/\m$ the residue class field. 

Let $L/K$ be a monogene finite field extension of $K$ of degree $n>1$; that is, $L=K(\t)$ for a certain $\t\in L$ which is the root of some monic irreducible polynomial $f\in A[x]$ of degree $n$.

Let $B\subset L$ be the integral closure of $A$ in $L$. 
The ring $B$ is a Dedekind domain, which we assume to be finitely generated as an $A$-module. This is the case, for instance, when $L/K$ is separable, or $K$ is complete, or $A$ is a finitely generated algebra over a field \cite[Ch.I, \S4]{serre}. 

Under this assumption, $B$ is a free $A$-module of rank $n$. 
An $A$-basis of $B$ is called an \emph{integral basis} of $L/K$.

Although integral bases are ordered families of elements in $B$, sometimes we forget the ordering and consider integral bases merely as subsets of $B$. 

Let $w_1,\dots, w_t$ be the valuations on $L$ extending $v$. For each $w_i$, let $B_i\subset L$ be the valuation ring, $\m_i$ the maximal ideal of $B_i$ and $k_i=B_i/\m_i$ the residue class field. Denote $f_i=[k_i\colon k]$ and $e_i=e(w_i/v)$. The ramification index $e_i$ is characterized by the property $w_i(L^*)=e_i^{-1}\Z$.     
In this situation, it holds the well-known relation $\sum_ie_if_i=n$.

Consider the following quasi-valuation extending $v$ to $L$: 
$$
w\colon L\lra \Q\cup\{\infty\},\quad w(\alpha)=\mn\nolimits\{w_i(\alpha)\mid 1\le i\le t\},
$$
For $\alpha,\beta\in L$, $a\in K$ and $m\in\Z$, this mapping $w$ satisfies:\medskip

(1) \ $w(\alpha\beta)\ge w(\alpha)+w(\beta)$, and equality holds if $\beta=\alpha^m$,\medskip

(2) \ $w(a\beta)=w(a)+w(\beta)=v(a)+w(\beta)$, \medskip

(3) \ $w(\alpha+\beta)\ge \mn\{w(\alpha),w(\beta)\}$, and equality holds if $w(\alpha)\ne w(\beta)$.

\begin{lemma}\label{values}
$w(L^*)=\cup_{i=1}^tw_i(L^*)=\cup_{i=1}^te_i^{-1}\Z$. 
\end{lemma}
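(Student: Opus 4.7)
The plan is to prove the two equalities separately, with the first being the substantive one.

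For the second equality $\bigcup_i w_i(L^*) = \bigcup_i e_i^{-1}\Z$, there is nothing to do beyond invoking the characterization of $e_i$ already stated in the paper: by definition $w_i(L^*)=e_i^{-1}\Z$.

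For the equality $w(L^*)=\bigcup_i w_i(L^*)$, the inclusion $w(L^*)\subseteq\bigcup_i w_i(L^*)$ is immediate, since by definition $w(\alpha)=\min_i w_i(\alpha)$ is one of the values $w_i(\alpha)$ and therefore lies in $w_i(L^*)$ for that particular $i$. The substantive direction is the reverse inclusion: given an index $i$ and a value $r\in w_i(L^*)$, one must exhibit some $\beta\in L^*$ with $w(\beta)=r$. The natural strategy is to pick an $\alpha_i\in L^*$ with $w_i(\alpha_i)=r$ and then use weak approximation for the pairwise inequivalent valuations $w_1,\dots,w_t$ on $L$ to produce a $\beta$ which is $w_i$-close to $\alpha_i$ but $w_j$-close to $0$ for every $j\ne i$, with ``close'' meaning ``at $w_j$-distance greater than $r$''. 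More precisely, choose $N>r$ and find $\beta\in L$ with $w_i(\beta-\alpha_i)>N$ and $w_j(\beta)>N$ for $j\ne i$; property (3) of $w$ (equivalently, the ultrametric inequality for $w_i$ together with $w_i(\beta-\alpha_i)>r=w_i(\alpha_i)$) forces $w_i(\beta)=r$, while $w_j(\beta)>r$ for every $j\ne i$. Taking the minimum then yields $w(\beta)=r$, as required.

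The only step that is not entirely routine is the invocation of weak approximation, but this is a standard fact for any finite collection of pairwise inequivalent (rank one) valuations on a field, so no real obstacle arises. The whole proof amounts to one line for each inclusion plus the approximation argument.
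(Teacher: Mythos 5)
Your proof is correct, and it rests on the same key input as the paper's argument: the pairwise independence (weak approximation) of the valuations $w_1,\dots,w_t$ on $L$. The executions differ in a way worth noting. The paper invokes independence only once per index $i$, producing a single $\alpha_i\in B$ with $w(\alpha_i)=e_i^{-1}$, and then asserts $w_i(L^*)=w(\{\alpha_i^m\mid m\in\Z\})$ via multiplicativity on powers. You instead invoke approximation once per target value $r\in w_i(L^*)$, producing $\beta$ with $w_i(\beta)=r$ and $w_j(\beta)>r$ for all $j\ne i$, so that $w(\beta)=r$ directly. Your version costs one approximation per value rather than per valuation, but it is actually the more watertight of the two: for negative exponents $m$ one has $w(\alpha_i^m)=m\max_j w_j(\alpha_i)$, so the paper's identity $w(\alpha_i^m)=me_i^{-1}$ requires $\max_j w_j(\alpha_i)=e_i^{-1}$ as well, which the construction need not (and, when $e_i^{-1}\notin e_j^{-1}\Z$ for some $j$, cannot) provide. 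Your per-value approximation handles negative $r$ with no extra care. The easy inclusion and the identification $w_i(L^*)=e_i^{-1}\Z$ are treated identically in both arguments.
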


\begin{proof}
By the very definition, $w(L^*)\subset\cup_{i=1}^tw_i(L^*)$. Since the valuations $w_1,\dots,w_t$ are pairwise independent, for each $1\le  i\le t$ there exists an element  $\alpha_i\in B$ with $w(\alpha_i)=e_i^{-1}$. Hence, $w_i(L^*)=w(\{\alpha_i^m\mid m\in \Z\})$ is contained in $w(L^*)$ for all $1\le  i\le t$.  
\end{proof}

Since $B=B_1\cap\cdots\cap B_t$, the integral elements are characterized by $$B=\{\alpha\in L\mid w(\alpha)\ge0\}.$$  
Also, the subset $\bb\subset B$ formed by an integral basis satisfies $w(\bb)\subset [0,1)$. In fact, if $\alpha\in\bb$ has $w(\alpha)>1$ then $\alpha/\pi$ is integral and it does not belong to the $A$-module generated by $\bb$.

\begin{definition}
A subset $\{\alpha_1,\dots,\alpha_d\}\subset L^*$ is called \emph{reduced} if for all $a_1,\dots,a_d\in K$, one has:
\begin{equation}\label{reduceness}
w\left(\sum\nolimits_{1\le j\le d}a_j\alpha_j\right)=\mn\{w(a_j\alpha_j)\mid 1\le j\le d\}.
\end{equation}
\end{definition}

The left and right hand-side of (\ref{reduceness}) increase by $\nu\in\Z$ if we replace each $a_j$ with $a_j\pi^\nu$. Thus, in order to check the equality (\ref{reduceness}) we can assume that all $a_j$ belong to $A$ and not all of them belong to $\m$.   

The following property follows immediately from the definition.

\begin{lemma}\label{scale}
If $\{\alpha_1,\dots,\alpha_d\}$ is reduced, then for all $a_1,\dots,a_d\in K$ the set $\{a_1\alpha_1,\dots,a_d\alpha_d\}$ is reduced.
\end{lemma}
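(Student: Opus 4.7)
The proof should be a direct application of the definition of reduced family, and I do not anticipate any real obstacle: the statement amounts to saying that reduceness is preserved under rescaling each element by a nonzero scalar of $K$.

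My plan is to pick arbitrary $b_1, \dots, b_d \in K$ and rewrite
\[
\sum_{j=1}^{d} b_j (a_j \alpha_j) = \sum_{j=1}^{d} (b_j a_j)\, \alpha_j,
\]
which is a $K$-linear combination of the original family $\{\alpha_1, \dots, \alpha_d\}$. Applying the reduceness hypothesis with coefficients $c_j := b_j a_j \in K$, together with associativity of scalar multiplication, gives
\[
w\!\left(\sum_{j} b_j (a_j \alpha_j)\right) = \mn\{w((b_j a_j)\alpha_j)\mid 1\le j\le d\} = \mn\{w(b_j(a_j\alpha_j))\mid 1\le j\le d\},
\]
which is exactly the reduceness equation for the rescaled family $\{a_1\alpha_1, \dots, a_d\alpha_d\}$.

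The only point that requires a brief word is the ambient set-theoretic condition in the definition of reduced: the family must lie in $L^*$. Thus one should read the lemma as implicitly assuming $a_j \in K^*$ for all $j$; otherwise, if some $a_j = 0$, the scaled family contains $0 \notin L^*$ and the statement would need to be reinterpreted, for instance by discarding zero entries, for which reduceness is preserved by the same argument applied to the surviving indices.
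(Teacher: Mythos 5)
Your proof is correct and is exactly the one-line argument the paper has in mind when it says the lemma ``follows immediately from the definition'': substitute $c_j = b_j a_j$ into the reduceness equation for $\{\alpha_j\}$ and use associativity of scalar multiplication. Your remark that the $a_j$ must be taken in $K^*$ (so that $a_j\alpha_j \in L^*$, as required by Definition of reduced) is a reasonable clarification of a small imprecision in the statement, and your fallback for zero scalars is fine.
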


It is easy to check that a reduced set is always $K$-linearly independent. Further, any reduced set $\left\{\alpha_j\mid 1\le j\le n\right\}$ of cardinality $n$ determines a reduced integral basis $\left\{\alpha_j/\pi^{\lfloor w(\alpha_j)\rfloor}\mid 1\le j\le n\right\}$, as the following result shows.

\begin{lemma}\label{reducedbasis}
A reduced set $\bb=\{\alpha_1,\dots,\alpha_n\}\subset L^*$ such that $w(\bb)\subset [0,1)$ is a reduced integral basis of $L/K$.
\end{lemma}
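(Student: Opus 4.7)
The plan is to verify two things: that every $\alpha_j$ lies in $B$, and that every $\beta\in B$ can be written as an $A$-linear combination of the $\alpha_j$'s. The first is immediate from the characterization $B=\{\alpha\in L\mid w(\alpha)\ge 0\}$ recalled just before the definition of reduced set: since $w(\alpha_j)\in[0,1)\subset[0,\infty)$, each $\alpha_j$ is integral.

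Next, I would invoke the remark (made just before the lemma) that a reduced set is automatically $K$-linearly independent. Since $|\bb|=n=[L:K]$, the set $\bb$ is a $K$-basis of $L$, so every $\beta\in B$ admits a unique expression $\beta=\sum_{j=1}^n a_j\alpha_j$ with $a_j\in K$. It remains to prove $v(a_j)\ge 0$ for each $j$.

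Here is where reduceness does the work. By the defining identity (\ref{reduceness}) and property (2) of $w$,
\[
w(\beta)=\mn\{w(a_j\alpha_j)\mid 1\le j\le n\}=\mn\{v(a_j)+w(\alpha_j)\mid 1\le j\le n\}.
\]
Since $\beta\in B$ we have $w(\beta)\ge 0$, so $v(a_j)+w(\alpha_j)\ge 0$ for every $j$ with $a_j\ne 0$. If some nonzero $a_j$ satisfied $v(a_j)<0$, then $v(a_j)\le-1$ because $v$ is integer-valued on $K^*$, whence $v(a_j)+w(\alpha_j)\le -1+w(\alpha_j)<0$, contradicting the preceding inequality because $w(\alpha_j)<1$. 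Therefore $v(a_j)\ge 0$, i.e., $a_j\in A$, so $\bb$ generates $B$ as an $A$-module and is an integral basis. Reduceness is part of the hypothesis, so it carries over for free.

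There is no real obstacle: the proof is a clean combination of the characterization of $B$ via $w$, the $K$-linear independence of reduced families, and the arithmetic fact that the integer-valuedness of $v$ together with the strict upper bound $w(\alpha_j)<1$ forces any putative negative $v(a_j)$ to push $v(a_j)+w(\alpha_j)$ strictly below zero.
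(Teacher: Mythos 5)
Your proof is correct and follows essentially the same route as the paper's: integrality of the $\alpha_j$ from $w(\alpha_j)\ge 0$, expression of any integral element in the $K$-basis $\bb$, and then the observation that reduceness together with $w(\alpha_j)<1$ and the integrality of $v(a_j)$ forces $a_j\in A$. The only difference is that you spell out the $K$-linear independence step and the integer-valuedness argument in slightly more detail than the paper does.
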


\begin{proof}
The assumption on $w(\bb)$ shows that $\bb\subset B$. Let us prove that $\bb$ generates $B$ as an $A$-module. 

Any $\alpha\in B$ may be expressed as $\alpha=\sum_{j=1}^na_j\alpha_j$, for some $a_1,\dots,a_n\in K$. By reduceness, for all $j$ we have
$$
w(a_j\alpha_j)\ge w(\alpha)\ge0.
$$
Since $w(\alpha_j)<1$ and $w(a_j)$ is an integer, this implies $w(a_j)\ge0$, or equivalently, $a_j\in A$.
\end{proof}

Our aim is to show that all reduced integral bases $\bb$ of $L/K$ have the same multiset $w(\bb)$. We want to compute the cardinality of the subsets:
$$
\bb_\delta=\{\alpha\in \bb\mid w(\alpha)=\delta\}\subset \bb,\quad \delta\in w(L^*).
$$
To this end, we need a certain criterion for reduceness.

For any $\delta\in w(L^*)$, consider the $A$-modules: $$L_\delta=\{\alpha\in L\mid w(\alpha)\ge \delta\}\supset L_\delta^+=\{\alpha\in L\mid w(\alpha)>\delta\}.$$ 

Since $\m L_\delta\subset L_\delta^+$, the quotient $L_\delta/L_\delta^+$ has a structure of $k$-vector space.

\begin{definition}
Consider the $k$-vector space $V=\prod_{i=1}^tk_i$ of dimension $\sum_{i=1}^tf_i$.
For each $1\le i\le t$ let us fix some uniformizer $\pi_i\in\m_i$. 

For all $\delta\in w(L^*)$ we define a reduction map:
$$
\rd_\delta\colon L_\delta\lra V,\quad \rd_\delta(\alpha)=(\alpha_{\delta,i})_{1\le i\le t},\quad
\alpha_{\delta,i}=
\alpha\pi_i^{-\lfloor e_i\delta\rfloor}+\m_i.
$$
\end{definition}

Clearly, $\rd_\delta$ is an homomorphism of $A$-modules and $\ker(\rd_\delta)=L_\delta^+$. Hence, it induces an embedding of $L_\delta/L_\delta^+$ as a $k$-subspace of $V$.

\begin{theorem}\cite{schmidt,schoernig},\cite[Lem. 5.7]{bases}\label{criterion}
Let $\bb\subset L$ with  $w(\bb)\subset [0,1)$. Then, $\bb$ is reduced if and only if $\rd_\delta(\bb_\delta)\subset V$ is a $k$-linearly independent family for all $\delta\in w(\bb)$.
\end{theorem}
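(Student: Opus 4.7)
The plan is to prove both directions by exploiting that $\rd_\delta$ is $A$-linear with kernel exactly $L_\delta^+$, so it induces a $k$-linear injection $L_\delta/L_\delta^+\hookrightarrow V$. (The $k$-structure is compatible because $A\cap\m_i=\m$ for each $i$, so the $A$-action on $V$ factors through $k$.) Hence a nontrivial $k$-linear relation among elements of $\rd_\delta(\bb_\delta)$ is the same data as an $A$-linear combination $\beta=\sum a_\alpha\alpha$ (with coefficients $a_\alpha\in A$, not all in $\m$) of elements of $\bb_\delta$ satisfying $w(\beta)>\delta$.

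For the forward direction, suppose $\bb$ is reduced and such a relation exists at level $\delta\in w(\bb)$. Extending the combination to the full basis by setting $a_\alpha=0$ for $\alpha\notin\bb_\delta$, reduceness forces
$$
w(\beta)=\mn\{v(a_\alpha)+w(\alpha)\mid \alpha\in\bb_\delta,\ a_\alpha\ne0\}=\delta,
$$
since $w(\alpha)=\delta$ throughout the support and at least one $a_\alpha$ is a unit. This contradicts $w(\beta)>\delta$.

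For the converse, assume each $\rd_\delta(\bb_\delta)$ is $k$-linearly independent and suppose, aiming at a contradiction, that $a_1,\dots,a_n\in A$ (not all in $\m$) violate reduceness:
$$
w\Big(\textstyle\sum_j a_j\alpha_j\Big)>\mu:=\mn_j\{v(a_j)+w(\alpha_j)\}.
$$
The crucial observation is that the set $S$ of indices achieving $\mu$ must share a single value $\delta:=w(\alpha_j)$: if $j,k\in S$ with $w(\alpha_j)\ne w(\alpha_k)$, then $w(\alpha_j)-w(\alpha_k)\in(-1,1)\setminus\{0\}$ would lie outside $\Z$, while $v(a_j)-v(a_k)\in\Z$, contradicting $v(a_j)+w(\alpha_j)=v(a_k)+w(\alpha_k)$. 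Rescaling by $\pi^{\mu-\delta}$, we obtain $b_j\in K$ with $b_j\in A^*$ for $j\in S$ and $w(b_j\alpha_j)>\delta$ for $j\notin S$, and $w(\sum_j b_j\alpha_j)>\delta$. By property (3), $\sum_{j\notin S}b_j\alpha_j\in L_\delta^+$, so $\sum_{j\in S}b_j\alpha_j\in L_\delta^+$ as well. Applying $\rd_\delta$ yields a nontrivial $k$-linear relation in $\rd_\delta(\bb_\delta)$, contradicting the hypothesis.

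The main obstacle is precisely the fractional-parts observation in the converse: it is what allows us to isolate a single layer $\delta$ from the failure of reduceness. Without the normalization $w(\bb)\subset[0,1)$, indices in $S$ could have $w$-values differing by integers, and the reduction to a contradiction inside one $\bb_\delta$ would break down. Everything else is a direct translation between $w$-valuations of $A$-linear combinations and $k$-linear relations in the graded pieces $L_\delta/L_\delta^+$.
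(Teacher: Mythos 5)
Your proof is correct. The paper itself does not prove this theorem but cites it from \cite{schmidt,schoernig} and \cite[Lem.~5.7]{bases}, so there is no in-paper proof to compare against. Your argument is the standard one and matches the strategy used in those sources: the forward direction is the observation that a nontrivial $k$-linear relation in $\rd_\delta(\bb_\delta)$ is the same as an $A$-combination of $\bb_\delta$-elements with unit coefficient landing in $L_\delta^+$, which reduceness forbids; and for the converse, the fractional-parts argument (the set $S$ of indices realizing the minimum $\mu$ must all have the same $w$-value, because $w(\bb)\subset[0,1)$ confines $w(\alpha_j)-w(\alpha_k)$ to $(-1,1)$ while $v(a_j)-v(a_k)\in\Z$) is exactly the key step that lets you isolate one layer, rescale by $\pi^{\mu-\delta}$, peel off the $L_\delta^+$ part via property~(3), and apply $\rd_\delta$ to get a nontrivial relation. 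One small point worth making explicit is that after rescaling the coefficients $b_j$ for $j\notin S$ still lie in $A$ (indeed $v(b_j)>\delta-w(\alpha_j)>-1$, hence $v(b_j)\ge 0$), so the $A$-linearity of $\rd_\delta$ can be invoked cleanly; you correctly note that $w(b_j\alpha_j)>\delta$ is what matters, so this is cosmetic, not a gap.
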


\begin{definition}\label{multiset}
Given a set $E$, we indicate by $\{e^{m_e}\mid e\in E\}$ the multiset which contains each element $e\in E$ with multiplicity $m_e$.
\end{definition}

\begin{theorem}\label{distribution}
Let $E=w(L^*)\cap [0,1)$, and for each $\delta\in E$ consider:  
$$I_\delta=\left\{1\le i\le t\mid \delta\in e_i^{-1}\Z\right\},\quad f_\delta=\sum\nolimits_{i\in I_\delta}f_i.$$

Then, for any reduced integral basis $\bb$ we have $\#\bb_\delta=f_\delta$. In other words,
the multiset $w(\bb)$ is equal to $W_{L/K}:=\{\delta^{f_\delta}\mid \delta\in E\}$.  
\end{theorem}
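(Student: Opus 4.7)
The plan is to bound $\#\bb_\delta$ above by the $k$-dimension of $L_\delta/L_\delta^+$, compute that dimension to be $f_\delta$, and then observe that both $\sum_{\delta\in E}\#\bb_\delta$ and $\sum_{\delta\in E}f_\delta$ equal $n$, which forces the equality $\#\bb_\delta=f_\delta$ at every $\delta\in E$.

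First I would determine the image of the embedding $\rd_\delta\colon L_\delta/L_\delta^+\hookrightarrow V$. For $i\notin I_\delta$, the value $e_i\delta$ is non-integral while $e_iw_i(\alpha)\in\Z$; combined with $w_i(\alpha)\ge\delta$ this forces $e_iw_i(\alpha)\ge\lfloor e_i\delta\rfloor+1$, so the $i$-th coordinate of $\rd_\delta(\alpha)$ automatically vanishes. Hence the image lies inside $\prod_{i\in I_\delta}k_i$. For surjectivity onto this subspace I would appeal to weak approximation for the pairwise independent valuations $w_1,\dots,w_t$: given $i_0\in I_\delta$ and $c\in k_{i_0}$, lift $c$ to some $\gamma\in B_{i_0}$ and produce $\alpha\in L$ approximating $\gamma\pi_{i_0}^{e_{i_0}\delta}$ closely at $w_{i_0}$ while satisfying $w_j(\alpha)>\delta$ for $j\ne i_0$. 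Then $\rd_\delta(\alpha)$ has $c$ in the $i_0$-slot and $0$ elsewhere. Since $\ker\rd_\delta=L_\delta^+$, this gives $\dim_k L_\delta/L_\delta^+=f_\delta$.

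For the counting step, Theorem \ref{criterion} gives $\#\bb_\delta\le\dim_k L_\delta/L_\delta^+=f_\delta$ for each $\delta\in E$. By Lemma \ref{values}, $E\cap e_i^{-1}\Z=\{0,1/e_i,\dots,(e_i-1)/e_i\}$, and hence
$$
\sum_{\delta\in E}f_\delta=\sum_{i=1}^tf_i\cdot\#\bigl(E\cap e_i^{-1}\Z\bigr)=\sum_{i=1}^te_if_i=n=\#\bb=\sum_{\delta\in E}\#\bb_\delta,
$$
so the intermediate inequalities must all be equalities. The only non-formal ingredient is the surjectivity of $\rd_\delta$ onto $\prod_{i\in I_\delta}k_i$, which rests on weak approximation to realise a prescribed residue at one valuation while being arbitrarily deep at the others; everything else is an application of Theorem \ref{criterion} and the fundamental identity $\sum_i e_if_i=n$.
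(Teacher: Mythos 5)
Your argument is correct and follows essentially the same route as the paper: the containment $\rd_\delta(\bb_\delta)\subset\prod_{i\in I_\delta}k_i$ together with Theorem \ref{criterion} gives $\#\bb_\delta\le f_\delta$, and the double count $\sum_{\delta\in E}f_\delta=\sum_i e_if_i=n=\sum_{\delta\in E}\#\bb_\delta$ forces equality. The only difference is that your weak-approximation argument for surjectivity of $\rd_\delta$ onto $\prod_{i\in I_\delta}k_i$ is superfluous (correct, but the upper bound alone suffices), so you could omit it.
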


\begin{proof}
By Lemma \ref{values}, $E=\cup_{i=1}^tE_i$, where $E_i$ are the sets:
$$E_i=e_i^{-1}\Z\cap [0,1)=\{0,e_i^{-1},\dots,(e_i-1)e_i^{-1}\},\quad 1\le i\le t.$$
For each $i$ consider the multiset $X_i=\{\delta^{f_i}\mid \delta \in E_i\}$.
Let $X=\coprod_{i=1}^tX_i$ be the formal disjoint union of these multisets. The natural inclusions $X_i\subset W_{L/K}$ induce a bijection of multisets between $X$ and $W_{L/K}$. Hence,
$$
\sum\nolimits_{\delta\in E}f_\delta=\#W_{L/K}=\#X=\sum\nolimits_{i=1}^te_if_i=n.
$$

On the other hand, $\rd_\delta(\bb_\delta)\subset\prod_{i\in I_\delta}k_i$ for all $\delta\in E$. In fact, for $\alpha\in\bb_\delta$ and $j\not\in I_\delta$, we have $w_j(\alpha)\ne\delta=w(\alpha)$; hence $w_j(\alpha)>\delta$ and $\alpha_{\delta,j}=0$.

By Theorem \ref{criterion}, $\#\bb_\delta\le f_\delta$ for all $\delta\in E$. Therefore,
$$n=\sum\nolimits_{\delta\in E}\#\bb_\delta\le \sum\nolimits_{\delta\in E}f_\delta=n,
$$
and the result follows.
\end{proof}

We end this section with a description of the transition matrices between reduced integral bases.\medskip

\noindent{\bf Notation.} {\it For any matrix $T\in A^{m\times m}$ we denote by $\overline{T}\in k^{m\times m}$ the matrix obtained by applying reduction modulo $\m$ to all entries in $T$}. \medskip

\begin{definition}\label{orthonormal}
An \emph{orthonormal basis} of $L/K$ is a reduced integral basis $(\alpha_1,\dots,\alpha_n)\in B^n$ ordered by increasing $w$-values: $w(\alpha_1)\le\cdots\le w(\alpha_n)$.
\end{definition}

This terminology is taken from \cite{bauch2}, where lattices over the polynomial
ring $k[x]$ are studied. In that context, $v=v_\infty$ is the valuation ``at the point of infinity", which restricted to $k[x]$ is equal to $-\deg$. For a finite extension $L/k(x)$ the function $-w$ is interpreted as a length function on $L$, playing a role analogous to the norm determined by a positive definite quadratic form in the theory of lattices over $\Z$.  

Theorem \ref{transition} below is inspired by \cite[Thm. 1.27, Lem. 1.28]{bauch2} too.

\begin{definition}\label{orthogroup}
Let $n=m_1+\cdots +m_\kappa$ be a partition of $n$ into a sum of positive integers. For any $T\in A^{n\times n}$ this partition induces a decomposition of $T$ into a $\kappa\times \kappa$ matrix of blocks:
$$
T=\left(T_{ij}\right),\quad T_{ij}\in A^{m_i\times m_j},\ 1\le i,j\le \kappa.
$$

The \emph{orthonormal group} $O\left(m_1,\dots,m_\kappa,A\right)$ is the subgroup of $\gl{n}{A}$ formed by all $T\in A^{n\times n}$ satisfying the following conditions:
\begin{enumerate}
\item $\ T_{ii}\in\gl{m_i}{A},\ 1\le i\le \kappa$,
\item $\ T_{ij}\in \m^{m_i\times m_j}$ for all $i>j$.
\end{enumerate}
\end{definition}

\begin{theorem}\label{transition}
Let $0=\eps_1< \eps_2<\cdots<\eps_\kappa<1$ be the elements in the underlying set of $W_{L/K}$, ordered by increasing size.

For $1\le i\le \kappa$ denote $m_i=f_{\eps_i}$, so that $W_{L/K}=\{\eps_i^{m_i}\mid 1\le i\le \kappa\}$.

Let $\mathbb{B}\in B^n$ be an orthonormal basis of $L/K$. Then, $\mathbb{B}'\in L^n$ is an orthonormal basis if and only if  
the transition matrix from $\mathbb{B}$ to $\mathbb{B'}$ belongs to the orthonormal group
 $O\left(m_1,\dots,m_\kappa,A\right)$.
\end{theorem}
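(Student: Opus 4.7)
Let $I_k=\{m_1+\cdots+m_{k-1}+1,\dots,m_1+\cdots+m_k\}$ denote the set of indices in the $k$-th block. The plan is to work block-wise with respect to the partition $n=m_1+\cdots+m_\kappa$. Writing $\mathbb{B}'=T\mathbb{B}$ entrywise as $\beta_i=\sum_j T_{ij}\alpha_j$, I partition the rows and columns of $T$ accordingly. Since $\mathbb{B}$ is orthonormal, Theorem \ref{distribution} combined with the ordering convention forces $w(\alpha_i)=\eps_k$ whenever $i\in I_k$, and the same equality must hold for $\mathbb{B}'$ in the orthonormal case.

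For the direction $T\in O(m_1,\dots,m_\kappa,A)\Rightarrow\mathbb{B}'$ orthonormal: since $T\in\gl{n}{A}$, the family $\mathbb{B}'\in B^n$ is automatically an $A$-basis of $B$. To compute $w(\beta_i)$ for $i\in I_k$ I invoke the reduceness of $\mathbb{B}$: $w(\beta_i)=\mn\{v(T_{ij})+w(\alpha_j)\mid 1\le j\le n\}$. The contributions split into three cases: for $j\in I_\ell$ with $\ell>k$, the term is $\ge \eps_\ell>\eps_k$; for $j\in I_\ell$ with $\ell<k$, condition (2) forces $T_{ij}\in\m$, so the term is $\ge 1+\eps_\ell>\eps_k$ (this is where $\eps_k<1$ is crucial); and for $j\in I_k$, the invertibility of $\overline{T}_{kk}$ guarantees at least one unit entry in row $i$, achieving the value $\eps_k$. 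This yields both the correct $w$-values and the correct ordering. For reduceness of $\mathbb{B}'$, I apply Theorem \ref{criterion}: the set $\mathbb{B}'_{\eps_k}$ equals $\{\beta_i\mid i\in I_k\}$, and the same estimate shows $\beta_i\equiv\sum_{j\in I_k}T_{ij}\alpha_j\pmod{L_{\eps_k}^+}$, since the remaining summands lie in $L_{\eps_k}^+$. Applying $\rd_{\eps_k}$, which is $A$-linear with kernel $L_{\eps_k}^+$, gives $\rd_{\eps_k}(\beta_i)=\sum_{j\in I_k}\overline{T}_{ij}\,\rd_{\eps_k}(\alpha_j)$, so $\rd_{\eps_k}(\mathbb{B}'_{\eps_k})$ is the image of the $k$-linearly independent family $\rd_{\eps_k}(\mathbb{B}_{\eps_k})$ under the invertible matrix $\overline{T}_{kk}$, hence itself linearly independent.

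For the converse, assume $\mathbb{B}'$ is orthonormal. Both are $A$-bases of $B$, so $T\in\gl{n}{A}$. For $i\in I_k$, reduceness of $\mathbb{B}$ and the equality $w(\beta_i)=\eps_k$ give $v(T_{ij})+\eps_\ell\ge\eps_k$ for every $j\in I_\ell$. When $\ell<k$, the difference $\eps_k-\eps_\ell$ lies in $(0,1)$ while $v(T_{ij})$ is a non-negative integer, so the inequality forces $v(T_{ij})\ge 1$, i.e., $T_{ij}\in\m$; this is condition (2). Reducing modulo $\m$, the matrix $\overline{T}$ is block upper-triangular with $\det(\overline{T})=\prod_k\det(\overline{T}_{kk})$; since $T\in\gl{n}{A}$, this product is nonzero, so each $\overline{T}_{kk}$ is invertible, establishing condition (1).

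The main obstacle is essentially bookkeeping, combined with the crucial arithmetic observation that all $\eps_k$ lie in $[0,1)$: this is what cleanly separates the sub-diagonal contributions (promoted strictly above $\eps_k$ by the factor $\pi$) from the diagonal contributions (which achieve $\eps_k$ exactly). Without this strict bound the block-triangular structure of $T$ would break down.
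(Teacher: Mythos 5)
Your proof is correct and follows essentially the same route as the paper's: the same block decomposition of each $\beta_i$ into sub-diagonal, diagonal and super-diagonal contributions, reduceness of $\mathbb{B}$ to pin down the $w$-values, and Theorem \ref{criterion} applied through the reduction maps $\rd_{\eps_k}$. The only (harmless) variation is in the converse, where you obtain invertibility of the diagonal blocks from $\det(\overline{T})=\prod_k\det(\overline{T}_{kk})\ne 0$, whereas the paper reads it off from the $k$-linear independence of $\rd_{\eps_k}(\mathbb{B}'_{\eps_k})$.
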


\begin{proof}
Write $\mathbb{B}=(\alpha_1,\dots\alpha_n)$ and denote
$$n_0=0;\quad n_i=m_1+\cdots+m_i,\quad 1\le i\le \kappa.$$ 

For a given $T\in O\left(m_1,\dots,m_\kappa,A\right)$ take
\begin{equation}\label{T}
\begin{pmatrix}\alpha'_1\\\vdots\\\alpha'_n \end{pmatrix}=
T\begin{pmatrix}\alpha_1\\\vdots\\\alpha_n \end{pmatrix}.
\end{equation}

For a given index $1\le d\le n$, let $1\le i\le \kappa$ be determined by $n_{i-1}< d \le n_i$. From (\ref{T}) we deduce that $\alpha'_d=\beta_-+\beta+\beta_+$, where 
\begin{equation}\label{tridec}
\as{1.2}
\begin{array}{ll}
\beta_-=\sum_{j=1}^{n_{i-1}}a_j\alpha_j &\mbox{ with  }a_j\in \m,\\
\beta=\sum_{j=n_{i-1}+1}^{n_i}a_j\alpha_j&\mbox{ with $a_j\in A$, not all of them in } \m,\\
\beta_+=\sum_{j=n_i+1}^na_j\alpha_j&\mbox{ with  }a_j\in A,
\end{array}
\end{equation}
being $(a_1\cdots a_n)$ the $d$-th row of $T$.
Since the family $\mathbb{B}$ is reduced, we deduce $$w(\beta_-)>1,\quad w(\beta)=\eps_i,\quad w(\beta_+)>\eps_i.$$ Hence, $w(\alpha'_d)=\eps_i=w(\alpha_d)$, and 
$$
\rd_{\eps_i}(\alpha'_d)=\rd_{\eps_i}(\beta)=\sum_{j=n_{i-1}+1}^{n_i}\overline{a}_j\rd_{\eps_i}(\alpha_j).
$$ 
Thus, $T$ preserves the sequence of $w$-values and moreover: 
\begin{equation}\label{redeps}
\begin{pmatrix}\rd_{\eps_i}(\alpha'_{n_{i-1}+1})\\\vdots\\\rd_{\eps_i}(\alpha'_{n_i}) \end{pmatrix}=
\overline{T}_{ii}\begin{pmatrix}\rd_{\eps_i}(\alpha_{n_{i-1}+1})\\\vdots\\\rd_{\eps_i}(\alpha_{n_i}) \end{pmatrix},\quad 1\le i\le\kappa.
\end{equation}
By Theorem \ref{criterion}, the family $\mathbb{B}'=(\alpha'_1,\dots\alpha'_n)\in B^n$ is reduced too, and by Lemma \ref{reducedbasis} it is an orthonormal basis.

Conversely, suppose that  $\mathbb{B}'=(\alpha'_1,\dots\alpha'_n)\in B^n$ is an orthonormal basis of $L/K$, and let $T\in\gl{n}{A}$ be the transition matrix from $\mathbb{B}$ to $\mathbb{B}'$, determined by (\ref{T}). From $w(\alpha'_d)=w(\alpha_d)$ we deduce (\ref{tridec}) and (\ref{redeps}). This proves that $T$ belongs to the orthogonal group.
\end{proof}

\section{Triangular reduced integral bases}\label{secTriang}
We are interested in the computation of \emph{triangular} reduced integral bases, because they are useful in many practical applications. For instance, they facilitate the computation of global integral bases by patching local ones with the aid of the chinese remainder theorem.

\begin{definition}\label{triang}
We say that $(\alpha_0,\dots,\alpha_{n-1})\in L^n$ is a \emph{triangular family} if $\alpha_j=g_j(\t)\pi^{r_j}$ for a certain monic polynomial $g_j\in A[x]$ of degree $j$, and an integer $r_j$, for each $0\le j<n$.
\end{definition}

In other words, the transition matrix $T\in\gl{n}{K}$ determined by  
$$
\begin{pmatrix}\alpha_{n-1}\\\vdots\\\alpha_0 \end{pmatrix}=T\left(\begin{array}{l} \t^{n-1}\!\!\!\\\vdots\\1\!\!\!\end{array}\right).
$$is upper triangular with entries $\pi^{r_{n-1}},\dots,\pi^{r_0}$ at the diagonal.

By Theorem \ref{reducedtriang} below,  the computation of triangular reduced integral bases amounts to compute, for each $0\le j<n$, a monic polynomial $g_j\in A[x]$ of degree $j$ such that $g_j(\t)$ attains the maximal $w$-value among all monic polynomials in $A[x]$ of degree $j$.

\begin{definition}\label{optimal}
For $0\le j<n$, consider:
$$
\delta_j=\mx\{w(g(\t))\mid g\in A[x] \mbox{ monic of degree }j\}.
$$
Since the valuations $w_1,\dots,w_t$ are discrete, this maximal value is attained by some monic polynomial $g\in A[x]$. In other words, $\delta_j\in w(L^*)$ for all $j$. 

We denote by $m(\delta_j)$ the multiplicity of $\delta_j$ in the family $\delta_0,\dots, \delta_{n-1}$. 
\end{definition}

Clearly, $\delta_0\le\cdots\le \delta_{n-1}$. In fact, if $0<j<n$ and $g\in [x]$ is a monic polynomial of degree $j-1$ with $w(g(\t))=\delta_{j-1}$, we have
$$
\delta_j\ge w(\t g(\t))\ge w(\t)+w(g(\t))\ge \delta_{j-1}.
$$

The following result proves the existence of triangular reduced integral bases, and offers an interesting point of view to distinguish triangular reduced integral bases among triangular integral bases. 

\begin{theorem}\cite[Thm. 1.4]{hayden}\label{reducedtriang}
Let $g_0,\dots,g_{n-1}\in A[x]$ be monic polynomials of degree $0,\dots,n-1$, respectively.
Let $\nu_j=w(g_j(\t))$ for $0\le j<n$, and consider the set
$\bb=\left\{g_0(\t)\pi^{-\nu_0},\dots, g_{n-1}(\t)\pi^{-\nu_{n-1}}\right\}$.
Then, 
\begin{enumerate}
\item $\bb$ is an integral basis if and only if $\lfloor \nu_j\rfloor=\lfloor\delta_j\rfloor$ for $0\le j<n$. 
\item $\bb$ is a reduced integral basis if and only if $ \nu_j=\delta_j$ for $0\le j<n$.
\end{enumerate}
\end{theorem}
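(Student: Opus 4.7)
The plan is to reduce (1) to a single discriminant identity and then deduce (2) from it together with Theorem~\ref{distribution}.

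For (1), the transition matrix from $(1,\theta,\ldots,\theta^{n-1})$ to $\mathcal{B}$ is triangular with diagonal entries $\pi^{-\lfloor\nu_j\rfloor}$, giving $v(\dsc(\mathcal{B})) = v(\dsc(f)) - 2\sum_j\lfloor\nu_j\rfloor$. Setting $M := (v(\dsc(f)) - v(d_{B/A}))/2$, the family $\mathcal{B}$ is an integral basis iff $\mathcal{B}\subset B$ and $\sum_j\lfloor\nu_j\rfloor = M$. Since $w(g_j(\theta)\pi^{-\lfloor\nu_j\rfloor}) = \nu_j - \lfloor\nu_j\rfloor \in [0,1)$, the containment $\mathcal{B}\subset B$ is automatic. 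Combined with $\lfloor\nu_j\rfloor\le\lfloor\delta_j\rfloor$ (which follows from $\nu_j\le\delta_j$), assertion (1) is thus equivalent to the key identity
\[
M = \sum_{j=0}^{n-1}\lfloor\delta_j\rfloor.
\]

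Proving this identity is the main obstacle. Fix $g_j^{\ast}\in A[x]$ monic of degree $j$ attaining $\delta_j$ (as guaranteed by Definition~\ref{optimal}) and set $\mathcal{B}^{\ast} = \{g_j^{\ast}(\theta)\pi^{-\lfloor\delta_j\rfloor}\}$. Then $\mathcal{B}^{\ast}\subset B$, and comparing discriminants shows the index $[B : A\mathcal{B}^{\ast}]$ has $A$-length $M - \sum_j\lfloor\delta_j\rfloor \ge 0$, giving one inequality. For the reverse I would prove that $\mathcal{B}^{\ast}$ is in fact an $A$-basis of $B$, by induction on $j$: the inductive step reduces to showing that $g_j^{\ast}(\theta)\pi^{-\lfloor\delta_j\rfloor}$ generates the rank-one quotient $B_j/B_{j-1}\hookrightarrow L_j/L_{j-1}\cong K$, where $B_j := B\cap(K\oplus K\theta\oplus\cdots\oplus K\theta^j)$. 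The delicate point is ruling out an element of $B_j$ whose leading $\theta^j$-coefficient has $v$-valuation below $-\lfloor\delta_j\rfloor$: such an element would supply a monic polynomial in $K[x]$ of degree $j$ whose evaluation at $\theta$ has $w$-value exceeding $\delta_j$, and one must use the inductive integral basis of $B_{j-1}$ to absorb its non-integral lower coefficients and produce a monic polynomial in $A[x]$ with comparable $w$-value, contradicting the definition of $\delta_j$.

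For (2), I will prove the converse direction first. Assuming $\nu_j = \delta_j$ for all $j$, (1) gives integrality, and reducedness follows from Theorem~\ref{criterion}: a hypothetical linear dependence of $\rd_\delta(\mathcal{B}_\delta)$, after clearing $\pi$-denominators and dividing by an appropriate leading coefficient, yields a monic polynomial in $K[x]$ of some degree $j<n$ with $w$-value strictly greater than $\delta_j$, which by the same absorption argument produces a monic polynomial in $A[x]$ contradicting the maximality defining $\delta_j$. Applied to $\mathcal{B}^{\ast}$, this converse shows $\mathcal{B}^{\ast}$ is reduced, so by Theorem~\ref{distribution} the multiset $\{\delta_j - \lfloor\delta_j\rfloor : 0\le j<n\}$ equals $W_{L/K}$. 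For the direct direction, $\mathcal{B}$ a reduced integral basis yields $\{\nu_j - \lfloor\nu_j\rfloor : 0\le j<n\} = W_{L/K}$ by the same theorem; combining the termwise inequalities $\nu_j - \lfloor\nu_j\rfloor \le \delta_j - \lfloor\delta_j\rfloor$ (from $\nu_j\le\delta_j$ and $\lfloor\nu_j\rfloor = \lfloor\delta_j\rfloor$ supplied by (1)) with the equality of multiset sums forces $\nu_j = \delta_j$ for every $j$.
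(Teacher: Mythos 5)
The paper does not prove this theorem; it imports it from Stainsby \cite{hayden}, so there is no internal proof to compare against. Your architecture --- reduce (1) to the index identity $M=\sum_j\lfloor\delta_j\rfloor$, deduce (2) from (1) together with Theorems \ref{criterion} and \ref{distribution} applied to a value-maximizing family $\bb^\ast$ --- is sound, and you have correctly located the one genuinely hard point. Two caveats before that. First, the discriminant bookkeeping silently assumes $L/K$ separable; in the paper's generality ($K$ complete, or $A$ finitely generated over a field) one may have $\dsc(f)=0$. The fix is routine: replace valuations of discriminants by $A$-lengths in the chain $A[\t]\subset A\langle\bb\rangle\subset B$, noting that the triangular transition matrix gives $\op{length}_A(A\langle\bb\rangle/A[\t])=\sum_j\lfloor\nu_j\rfloor$. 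Second, granting the key identity, the rest of your deductions (termwise inequalities versus equal sums, the multiset comparison via Theorem \ref{distribution}) are correct.

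The gap is at the ``delicate point,'' which is the engine of the whole theorem, and the mechanism you describe is not quite the one that works. What is needed is: \emph{for every monic $h\in K[x]$ of degree $j$ one has $w(h(\t))\le\delta_j$}, even though $\delta_j$ is defined as a maximum over $A[x]$ only. The working argument is not to ``absorb non-integral lower coefficients'' of $h$, but to expand $h=g_j^\ast+\sum_{d<j}b_dg_d^\ast$ with $b_d\in K$ and observe that if $w(h(\t))>\delta_j$ then $w\bigl(\sum_{d<j}b_dg_d^\ast(\t)\bigr)=\delta_j$, whence \emph{reducedness of the lower-degree optimal family} gives $\min_d(v(b_d)+\delta_d)=\delta_j$, so $v(b_d)\ge\delta_j-\delta_d\ge0$ and $h$ was in $A[x]$ all along --- contradicting the definition of $\delta_j$. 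Since that reducedness is itself part of what you are proving (you invoke ``the same absorption argument'' again in part (2) to establish it), the proof must be organized as a single induction on the degree $j$, alternating between the bound for monic polynomials over $K$ of degree $j$ and the reducedness (via Theorem \ref{criterion}) of $g_0^\ast(\t),\dots,g_j^\ast(\t)$; each statement at level $j$ uses the other at levels $<j$. As written, your proposal leaves this interdependence implicit and part (1) tacitly relies on a reducedness statement that only appears in part (2), so a careless reading is circular. With the induction set up explicitly, your plan does yield a complete proof.
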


By Theorems \ref{distribution} and \ref{reducedtriang}, the multiset $\{\delta_j+\Z\mid 0\le j<n\}$ is an intrinsic invariant of the extension $L/K$. More precisely, $$W_{L/K}=\{\delta_0-[\delta_0],\dots,\delta_{n-1}-[\delta_{n-1}]\}.$$
However, the multiset $\Delta=\{\delta_0,\dots,\delta_{n-1}\}$ of all maximal $w$-values depends on the choice of the  polynomial $f$ defining the extension $L/K$ (but not on the choice of the root $\t$ of $f$).

Consider Gauss' extension of the valuation $v$ to the polynomial ring $A[x]$:
$$
v\left(\sum\nolimits_{0\le d}a_dx^d\right)=\mn\left\{v(a_d)\mid 0\le d\right\}.
$$

\begin{lemma}\label{wpoly}
For a given $g=\sum\nolimits_{d=0}^{n-1}a_dx^d\in A[x]$, let $d_0$ be maximal with the property
$v(g)=v(a_{d_0})$. Then, $w(g(\t))\le v(g)+\delta_{d_0}$. 
\end{lemma}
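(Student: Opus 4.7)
The plan is to reduce the claim to the case $v(g)=0$ and then to express $\tilde{g}(\t)$ in a carefully chosen reduced basis. I first normalize by writing $g=\pi^{v(g)}\tilde{g}$, so that $\tilde{g}\in A[x]$ has all coefficients in $A$, the coefficient at degree $d_0$ is a unit, and the coefficients at degrees $d>d_0$ all lie in $\m$. Since $w(g(\t))=v(g)+w(\tilde{g}(\t))$ by property $(2)$ of $w$, the claim reduces to showing $w(\tilde{g}(\t))\le\delta_{d_0}$.

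Next, for each $0\le j<n$, I pick a monic polynomial $g_j\in A[x]$ of degree $j$ achieving $w(g_j(\t))=\delta_j$, which exists by Definition \ref{optimal}. By Theorem \ref{reducedtriang} these $g_j$'s give rise (after the appropriate scaling by powers of $\pi$) to a reduced integral basis of $L/K$, and then Lemma \ref{scale} ensures that the family $\{g_j(\t)\}_{j=0}^{n-1}$ is itself reduced. Because the $g_j$'s are monic of pairwise distinct degrees $0,1,\dots,n-1$, they form an $A$-basis of the submodule $\{h\in A[x]\mid\deg h<n\}$ via a unitriangular change of basis with entries in $A$, and one writes uniquely $\tilde{g}(x)=\sum_{j=0}^{n-1}c_j g_j(x)$ with all $c_j\in A$.

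The crucial step is to pin down $c_{d_0}$. Inverting the unitriangular transition between the bases $\{x^j\}$ and $\{g_j\}$, the contribution to $c_{d_0}$ coming from the coefficients $\tilde{g}_d$ with $d>d_0$ lies in $\m$ (since $\pi\mid\tilde{g}_d$ for every such $d$ by the \emph{maximal} choice of $d_0$), while the contribution of $\tilde{g}_{d_0}$ itself is just $\tilde{g}_{d_0}$. Hence $c_{d_0}\equiv\tilde{g}_{d_0}\pmod{\m}$ is a unit, and so $v(c_{d_0})=0$. Invoking reduceness of $\{g_j(\t)\}$ now yields
\[
w(\tilde{g}(\t))=\min_{0\le j<n}\bigl(v(c_j)+\delta_j\bigr)\le v(c_{d_0})+\delta_{d_0}=\delta_{d_0},
\]
and the stated inequality follows. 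I expect this identification of $c_{d_0}$ modulo $\m$ to be the main obstacle, since it is precisely where the maximality condition on $d_0$ is exploited to ensure that the unit coefficient $\tilde{g}_{d_0}$ survives the triangular mixing with the higher-degree coefficients of $\tilde{g}$.
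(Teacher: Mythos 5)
Your proof is correct and follows essentially the same route as the paper's: express $g$ in the basis of monic polynomials $g_0,\dots,g_{n-1}$ attaining the maximal values (reduced by Theorem \ref{reducedtriang} and Lemma \ref{scale}), observe that the unitriangular change of basis preserves the valuation conditions on the coefficients of degree $\ge d_0$, and conclude by reduceness. The only cosmetic differences are your initial normalization to $v(g)=0$ and your more explicit justification of the step the paper dismisses with ``clearly''.
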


\begin{proof}
Let $g_0,\dots,g_{n-1}\in A[x]$ be monic polynomials of degree $0,\dots,n-1$ attaining the maximal $w$-values $\delta_0,\dots,\delta_{n-1}$. Obviously, we can write $g$ in a unique way as $g=\sum_{d=0}^{n-1}b_dg_d$ with $b_0,\dots,b_{n-1}\in A$. By hypothesis, 
$$
v(a_{n-1}),\dots, v(a_{d_0+1})>v(g),\quad  v(a_{d_0})=v(g).
$$
Clearly, this forces the coefficients $b_d$ to satisfy the same conditions: 
$$
v(b_{n-1}),\dots, v(b_{d_0+1})>v(g),\quad  v(b_{d_0})=v(g).
$$
By Theorem \ref{reducedtriang} and Lemma \ref{scale}, the family $g_0(\t),\dots,g_{n-1}(\t)$ is reduced, so that $w(g(\t))=\mn\{v(b_d)+\delta_d\mid 0\le d<n\}\le v(b_{d_0})+\delta_{d_0}= v(g)+\delta_{d_0}$.   
\end{proof}

\subsection{Triangulation of reduced integral bases}
In this section, we discuss a triangulation procedure which may be applied to any reduced integral basis $\bb=\{\beta_1,\dots,\beta_n\}$ of the form
$$
\beta_j=g_j(\t)\pi^{-\lfloor \nu_j\rfloor},\quad \nu_j=w(q_j(\t)),\quad 1\le j\le n,
$$
where $q_1\dots,q_n$ are polynomials in $A[x]$ whose $w$-values $\nu_1,\dots,\nu_n$ are known. 

Such a basis is provided, for instance, by the \emph{method of the quotients} \cite{bases}, or the \emph{multipliers method} \cite{bauch1}, both based on the Montes algorithm \cite{GMN,algo}. 

The standard triangulation procedures, like the Hermite Normal Form (HNF) routine, destroy reduceness. Our aim is to use these standard techniques but in a controlled way which preserves reduceness. 

\begin{definition}\label{qfamily}
For an integer $0<d\le n$, we say that $q_1,\dots,q_d\in A[x]$ is a \emph{$d$-reduced polynomial family} if the following conditions are satisfied:
\begin{enumerate}
\item $\deg(q_j)<d$ and $v(q_j)=0$, for all $1\le j\le d$.
\item $q_1(\t),\dots,q_d(\t)$ is a reduced family.
\end{enumerate}
\end{definition}

\begin{lemma}\label{flag}
Consider the flag of $K$-subspaces of $L$: 
$$
L=L_n\supsetneq L_{n-1}\supsetneq \cdots\supsetneq L_1=K\supsetneq L_0=\{0\},
$$
where $L_i=\langle 1,\t,\dots,\t^{i-1}\rangle_K$ for $1\le i\le n$.

For $0<d\le n$, let $q_1,\dots,q_d$ be a $d$-reduced polynomial family, with $w$-values $\nu_1,\dots,\nu_d$. Then, 
\begin{enumerate}
\item $\nu_j\le \delta_{d-1}$ for all $1\le j\le d$.
\item $q_1(\t)/\pi^{\lfloor\nu_1\rfloor},\dots,q_d(\t)/\pi^{\lfloor\nu_d\rfloor}$
is an $A$-basis of the $A$-module $B\cap L_d$.
\end{enumerate}
\end{lemma}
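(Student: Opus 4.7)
The plan is to handle (1) as a direct application of Lemma \ref{wpoly}. Given $q_j$ with $v(q_j)=0$ and $\deg q_j<d$, the index $d_0$ of that lemma satisfies $d_0\le\deg q_j<d$, so the monotonicity $\delta_0\le\cdots\le\delta_{n-1}$ recorded just after Definition \ref{optimal} yields
$$
\nu_j=w(q_j(\theta))\le v(q_j)+\delta_{d_0}=\delta_{d_0}\le \delta_{d-1}.
$$

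For (2), my first step is to check that each rescaled element $\beta_j:=q_j(\theta)/\pi^{\lfloor\nu_j\rfloor}$ lies in $B\cap L_d$. The inclusion $\beta_j\in L_d$ is immediate from $\deg q_j<d$, while $w(\beta_j)=\nu_j-\lfloor\nu_j\rfloor\in[0,1)$ forces $\beta_j\in B$. Lemma \ref{scale} then transfers the reduceness of $(q_j(\theta))_{1\le j\le d}$ to $(\beta_j)_{1\le j\le d}$, so the family is in particular $K$-linearly independent; since $\dim_K L_d=d$, it must be a $K$-basis of $L_d$.

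To upgrade this $K$-basis to an $A$-basis of $B\cap L_d$, I would mimic the argument of Lemma \ref{reducedbasis} inside the flag. For $\alpha\in B\cap L_d$, expand $\alpha=\sum_{j=1}^d a_j\beta_j$ with $a_j\in K$; reduceness gives
$$
v(a_j)+w(\beta_j)\ge w(\alpha)\ge 0, \qquad 1\le j\le d.
$$
Since $w(\beta_j)<1$ and $v(a_j)\in\mathbb{Z}$, this forces $v(a_j)\ge 0$, i.e.\ $a_j\in A$, so the $\beta_j$ generate $B\cap L_d$ over $A$. I anticipate no serious obstacle: the entire lemma is a localized version of Lemma \ref{reducedbasis} inside the flag, and the flag structure is respected automatically because each $q_j$ has degree strictly less than $d$, keeping $q_j(\theta)$ inside $L_d$.
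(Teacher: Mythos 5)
Your argument is correct and follows exactly the route the paper intends: item (1) is a direct application of Lemma~\ref{wpoly} together with the monotonicity of $\delta_0\le\cdots\le\delta_{n-1}$, and item (2) transplants the argument of Lemma~\ref{reducedbasis} to the subspace $L_d$, using Lemma~\ref{scale} to pass to the rescaled family $\beta_j$. The paper's own proof is just a one-line pointer to these two facts, so your write-up is an accurate (and more detailed) expansion of it.
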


\begin{proof}
The first item follows immediately from Lemma \ref{wpoly}. The second follows from the same arguments of the proof of Lemma \ref{reducedbasis}.
\end{proof}

The triangulation procedure iterates certain \emph{triangulation steps} with the following aim:\medskip

\noindent{\bf Triangulation step}\medskip

\noindent{\bf Input.} A $d$-reduced polynomial family $q_1,\dots,q_d$, whose sequence of $w$-values 
$\nu_1,\dots,\nu_d$ is known.\medskip

\noindent{\bf Output.} 
\begin{itemize}
\item $\delta_{d-1}$ and $m:=m(\delta_{d-1})$.
\item Monic polynomials $g_{d-1},\dots,g_{d-m}\in A[x]$ such that
$$\deg (g_{d-j})=d-j,\quad w(g_{d-j}(\t))=\delta_{d-1},\quad 1\le j\le m.$$
\item A $(d-m)$-reduced polynomial family  $q'_1,\dots,q'_{d-m}$, whose sequence of $w$-values 
$\nu'_1,\dots,\nu'_{d-m}$ is known.
\end{itemize}\medskip

We start with the $n$-reduced poynomial family provided by either method, \emph{quotients} \cite{bases} or \emph{multipliers} \cite{bauch1}. Let $r$ be the number of pairwise different maximal $w$-values.
After $r$ triangulation steps, we end with a family of monic polynomials $g_{n-1},\dots,g_0\in A[x]$ of degree $n-1,\dots,0$, attaining the maximal $w$-values $\delta_{n-1},\dots,\delta_0$. By Theorem \ref{reducedtriang}, this yields a triangular reduced integral basis of $L/K$. 

From now on, we fix a $d$-reduced polynomial family $q_1,\dots,q_d$ with sequence of $w$-values $\nu_1,\dots,\nu_d$. Let $\nu=\mx\{\nu_j\mid 1\le j\le d\}$ and let $\ell$ be the multiplicity of $\nu$ in the sequence $\nu_1,\dots,\nu_d$. 

We suppose moreover that the polynomials are ordered so that: 
$$
\nu_1=\cdots=\nu_\ell=\nu,\quad \nu_{\ell+1},\dots,\nu_d<\nu.
$$

The concrete description of the triangulation step, given in Proposition \ref{step}, requires an auxiliary result.

\begin{lemma}\label{P}
Let $\Gamma_{\ell,d}(A)$ be the subgroup of $\gl{d}{A}$ of all matrices $U$ of the form:
\begin{equation}\label{TPQ}
\as{1.2}U=\left(\begin{array}{c|c}
P&0\\\hline Q&I_{d-\ell}
\end{array}\right),\quad P\in\gl{\ell}{A}, \ Q\in A^{(d-\ell)\times \ell}.
\end{equation}
For any $U\in\Gamma_{\ell,d}(A)$ the polynomials $q'_1,\dots,q'_d\in A[x]$ obtained as
$$
\begin{pmatrix}q'_1\\\vdots\\q'_d \end{pmatrix}=U
\begin{pmatrix}q_1\\\vdots\\q_d \end{pmatrix}
$$
yield a reduced family $q'_1(\t),\dots,q'_d(\t)\in L$ with the same sequence of $w$-values 
$\nu_1,\dots,\nu_d$.
\end{lemma}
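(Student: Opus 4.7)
The plan is to establish two things in turn: that $w(q'_j(\t))=\nu_j$ for each $j$, and that the defining equality of reduceness holds for every $K$-linear combination $\sum a_j q'_j(\t)$.

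For the individual $w$-values, the block structure of $U$ gives $q'_j(\t)=\sum_{k\le\ell}P_{jk}\,q_k(\t)$ for $j\le\ell$, and $q'_j(\t)=q_j(\t)+\sum_{k\le\ell}Q_{j-\ell,k}\,q_k(\t)$ for $j>\ell$. In the first case, reduceness of the $q$-family together with $\nu_k=\nu$ for all $k\le\ell$ yields $w(q'_j(\t))=\nu+\mn_k v(P_{jk})$; since $P\in\gl{\ell}{A}$ has invertible reduction $\overline P$, the $j$-th row contains a unit of $A$, so this minimum is $0$. In the second case the $k\le\ell$ sum has $w$-value $\ge\nu>\nu_j=w(q_j(\t))$, so property (3) of $w$ yields $w(q'_j(\t))=\nu_j$ directly.

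For reduceness, given $a_1,\dots,a_d\in K$, I would set $b_k=\sum_j U_{jk}a_j$, so that $\sum_j a_j q'_j(\t)=\sum_k b_k q_k(\t)$. Reduceness of the $q$-family then gives that this common element has $w$-value $M_b:=\mn_k\{v(b_k)+\nu_k\}$, whereas the target value is $M_a:=\mn_j\{v(a_j)+\nu_j\}$. Using $b_k=a_k$ for $k>\ell$, both quantities split as $M_a=\mn(A',A'')$ and $M_b=\mn(B',A'')$, where
$$
A'=\nu+\mn_{j\le\ell}v(a_j),\quad A''=\mn_{j>\ell}\{v(a_j)+\nu_j\},\quad B'=\nu+\mn_{k\le\ell}v(b_k).
$$
The inequality $M_b\ge M_a$ is automatic from property (3), and the case $A'\ge A''$ is then immediate; the crux is to show $B'\le A'$ in the case $A'<A''$.

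The main obstacle is precisely this last step. In the case $A'<A''$, I would rescale by setting $\alpha_j=a_j/\pi^{A'-\nu}$. Since $\nu_j<\nu$ for $j>\ell$, the assumption $A'<A''$ together with integrality of $v$ forces $v(a_j)\ge A'-\nu+1$ for $j>\ell$, so that $\alpha_j\in\m$ there, while for $j\le\ell$ we have $\alpha_j\in A$ with at least one unit among them. Dividing the formula for $b_k$ with $k\le\ell$ by $\pi^{A'-\nu}$ and reducing modulo $\m$ then kills the $j>\ell$ contributions and leaves $\overline{b_k/\pi^{A'-\nu}}=\sum_{j\le\ell}\overline P_{jk}\,\overline\alpha_j$. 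Since $\overline P$ is invertible and the vector $(\overline\alpha_j)_{j\le\ell}$ is nonzero, some $k\le\ell$ has $v(b_k)=A'-\nu$, whence $B'\le A'$ and $M_b=M_a$ as required. Everything else is a routine consequence of the block structure of $U$ and the original reduceness hypothesis.
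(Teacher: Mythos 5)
Your proof is correct, and it takes a genuinely different route from the paper's. The paper establishes reduceness by invoking Theorem~\ref{criterion}: it rescales the $q_j(\t)$ to $\beta_j$ with $w$-values in $[0,1)$, shows that $\rd_{\eps_j}(\beta'_j)=\rd_{\eps_j}(\beta_j)$ for $j>\ell$ (since $\beta'_j-\beta_j$ has strictly larger $w$-value), observes that the first $\ell$ reduction vectors transform by the invertible matrix $\overline P$, and concludes that each $\rd_\delta(\bb'_\delta)$ is still $k$-linearly independent. You instead verify the defining identity of reduceness directly: you pass to $b=U^T a$, compare $M_b$ with $M_a$, and in the critical case $A'<A''$ you rescale and reduce modulo $\m$, using the invertibility of $\overline P$ to produce a coordinate $b_k$ with $v(b_k)=A'-\nu$. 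Both arguments ultimately hinge on $\overline P\in\gl{\ell}{k}$; the paper's packaged use of the criterion keeps the bookkeeping lighter and slots naturally into the surrounding development, while your bare-hands computation is more elementary and self-contained, making visible exactly which valuation inequalities carry the argument. One small point worth flagging in a polished write-up: your rescaling argument tacitly assumes $A'<\infty$, i.e.\ that not all $a_j$ with $j\le\ell$ vanish; if they all vanish the claim reduces trivially to reduceness of $q_{\ell+1},\dots,q_d$, so this is harmless, but it deserves a sentence.
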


\begin{proof}


The sequence of $w$-values is preserved by an argument completely analogous to that used in the proof of Theorem \ref{transition}.

For all $1\le j\le d$, let us denote
$$\beta_j=q_j(\t)/\pi^{\lfloor\nu_j\rfloor},\quad \beta'_j=q'_j(\t)/\pi^{\lfloor\nu_j\rfloor},\quad \eps_j=\nu_j-\lfloor \nu_j\rfloor=w(\beta_j)=w(\beta'_j).$$

Consider the sets $\bb=\{\beta_j\mid 1\le j\le d\}$, $\bb'=\{\beta'_j\mid 1\le j\le d\}$.
By Lemma \ref{scale}, $\bb$ is reduced and we need only to check that $\bb'$ is reduced. We shall prove this by applying the reduceness criterion of Theorem \ref{criterion}. 


Let $U$ be given by the matrices $P$, $Q$ as in (\ref{TPQ}). For $j>\ell$, we have
$$
q'_j=q_j+a_1q_1+\cdots+ a_\ell q_\ell,
$$
with $a_1,\dots,a_\ell\in A$ the entries in the $(j-\ell)$-th row of $Q$. Since $$w(a_1q_1+\cdots+ a_\ell q_\ell)\ge\nu>\nu_j,$$ we deduce that $w(\beta'_j-\beta_j)>\eps_j$. This implies $\rd_{\eps_j}(\beta'_j)=\rd_{\eps_j}(\beta_j)$.

On the other hand, let $\eps=\nu-\lfloor \nu\rfloor=w(\beta_j)=w(\beta'_j)$ for all $1\le j\le \ell$. The mapping $\rd_\eps$  is linear in the following sense:
$$
\begin{pmatrix}\beta'_1\\\vdots\\\beta'_{\ell} \end{pmatrix}=
P\begin{pmatrix}\beta_1\\\vdots\\\beta_{\ell} \end{pmatrix}
\quad\Longrightarrow\quad
\begin{pmatrix}\rd_\eps(\beta'_1)\\\vdots\\\rd_\eps(\beta'_{\ell}) \end{pmatrix}=
\overline{P}\begin{pmatrix}\rd_\eps(\beta_1)\\\vdots\\\rd_\eps(\beta_\ell) \end{pmatrix}.
$$

Consider the set $I=\{1\le j\le n\mid\eps_j=\eps\}$, which contains $1,\dots,\ell$ and some more indices. By definition,
$$\bb_\epsilon=\{\beta_j\mid j\in I\}, \quad \bb'_\epsilon=\{\beta'_j\mid j\in I\}.$$ 
By Theorem \ref{criterion}, $\rd_\eps(\bb_\eps)$ is a $k$-linearly independent subset of $V$. Since $\overline{P}\in \gl{\ell}{k}$, the family $\rd_\eps(\beta'_1),\dots,\rd_\eps(\beta'_\ell)$ is $k$-linearly independent and generates the same subspace than $\rd_\eps(\beta_1),\dots,\rd_\eps(\beta_\ell)$. Since  $\rd_{\eps}(\beta'_j)=\rd_{\eps}(\beta_j)$ for all $j\in I$, $j>\ell$, the set $\rd_\eps(\bb'_\eps)$ is $k$-linearly independent too. 

Also, for all $\delta\in w(L^*)\cap[0,1)$, $\delta\ne\eps$, the set
 $\rd_\delta(\bb'_\delta)=\rd_\delta(\bb_\delta)$ is $k$-linearly independent. By Theorem \ref{criterion}, $\bb'$ is reduced.
\end{proof}

Let $T=(t_{i,j})\in A^{d\times d}$ be the matrix whose $i$-th row captures the coefficients of the polynomial $q_i$ in decreasing degree. Thus,
\begin{equation}\label{encoding}
T\left(\begin{array}{l} x^{d-1}\!\!\!\\\vdots\\1\!\!\end{array}\right)=\begin{pmatrix}q_1\\\vdots\\q_d \end{pmatrix}.
\end{equation}

We say that the rows of $T$ \emph{encode} the polynomials $q_1,\dots,q_d$.

The triangulation step replaces the matrix $T$ with $UT$ for an adequate $U$ in the group $\Gamma_{\ell,d}(A)$ introduced in Lemma \ref{P}, and then divides out the rows of $UT$ by adequate powers of $\pi$. 

\begin{proposition}\label{step}
Let $T_{\op{up}}$ and  $T_{\op{down}}$ be the matrices formed by the first $\ell$ rows of $T$ and the last $d-\ell$ rows of $T$, respectively. Express the Hermite normal form of $T_{\op{up}}$ as:
$$\as{1.2}
\left(\begin{array}{c|c}
I_m&\qquad C\qquad \\\hline 0&\qquad D\qquad 
\end{array}\right),\quad C\in A^{m\times (d-m)},\ D\in A^{(\ell-m)\times (d-m)},
$$
with all entries in the first column of $D$ belonging to $\m$. Write
$$
T_{\op{down}}=(E\mid\quad F\quad),\quad E\in A^{(d-\ell)\times m},\ F\in A^{(d-\ell)\times (d-m)},
$$
$$
\as{1.2}
T'=\left(\begin{array}{c}
D\\\hline \ F-EC 
\end{array}\right)\in A^{(d-m)\times (d-m)}.
$$
Let $h_1,\dots,h_{d-m}$ be the polynomials encoded by the rows of $T'$.  Then, 
\begin{enumerate}
\item $\delta_{d-1}=\nu:=\mx\{\nu_j\mid 1\le j\le d\}$ and $m(\delta_{d-1})=m$.
\item The monic polynomials $g_{d-1},\dots,g_{d-m}$ encoded by the rows of the matrix $(I_m\mid \ C\ )$ have degree $d-1,\dots,d-m$ and satisfy$$w(g_{d-1}(\t))=\cdots=w(g_{d-m}(\t))=\delta_{d-1}.$$
\item All entries in the matrix $D$ belong to $\m$.
\item The polynomials $q'_j=h_j\,\pi^{-v\left(h_j\right)}$, for $1\le j\le d-m$,  form a $(d-m)$-reduced polynomial family whose sequence of $w$-values is: $$\nu'_j=\nu_{m+j}-v\left(h_j\right),\quad 1\le j\le d-m.$$
\end{enumerate}
\end{proposition}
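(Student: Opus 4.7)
My plan is to establish items (2), (1), (3), (4) in this order, leveraging reduceness of $q_1(\t),\dots,q_d(\t)$ and the shape of the HNF. By Lemma~\ref{P} applied with $Q=0$ and $P$ the matrix carrying $T_{\op{up}}$ to its HNF, I may replace $q_1,\dots,q_\ell$ by the rows of $(I_m\mid C)$ stacked on $(0\mid D)$ without altering $w$-values or reduceness; thus, without loss of generality, $T_{\op{up}}$ is already in HNF form.

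For (2), the monicity and degrees $d-1,\dots,d-m$ of $g_{d-1},\dots,g_{d-m}$ follow directly from the identity block in $(I_m\mid C)$. Each $g_{d-i}$ is an $A$-combination $\sum_{j\le\ell}p_{ij}q_j$ of the top rows, so reduceness of $q_1(\t),\dots,q_\ell(\t)$ (all of $w$-value $\nu$) gives $w(g_{d-i}(\t))=\nu+\min_j v(p_{ij})$; if all $p_{ij}$ were in $\m$, every coefficient of $g_{d-i}$ would lie in $\m$, contradicting monicity, so the minimum is $0$ and $w(g_{d-i}(\t))=\nu$.

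For (1), $\delta_{d-1}\ge\nu$ is immediate from $g_{d-1}$. Conversely, writing any monic $g\in A[x]$ of degree $d-1$ as $g=\sum_j\alpha_jq_j$ with $\alpha_j\in K$, reduceness yields $w(g(\t))=\min_j\{v(\alpha_j)+\nu_j\}$. If this exceeded $\nu$, then $v(\alpha_j)\ge 1$ for every $j$ (as an integer strictly exceeding $\nu-\nu_j\ge 0$), so each $\alpha_j\in\m$, forcing the leading coefficient of $g$ into $\m$ and contradicting monicity. The multiplicity claim rests on showing $\delta_{d-m-1}<\nu$, which is the principal obstacle. Arguing by contradiction, a monic $g$ of degree $d-m-1$ with $w(g(\t))=\nu$ decomposes as $\sum_j\alpha_jq_j$ with $\alpha_j\in A$ for $j\le\ell$ and $\alpha_j\in\m$ for $j>\ell$. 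The vanishing of the top $m$ coefficients of $g$, read through the HNF shape, reduces to $\alpha_i+\sum_{j>\ell}\alpha_jE_{j-\ell,i}=0$ for $i=1,\dots,m$, forcing $\alpha_1,\dots,\alpha_m\in\m$ too. Then the coefficient of $x^{d-m-1}$ in $g$ unfolds into a sum of terms each in $\m$ (using crucially that the first column of $D$ lies in $\m$), yet must equal $1$, a contradiction. Combined with the inductive fact from preceding triangulation steps that $\delta_j>\nu$ for $d\le j<n$, this gives $m(\delta_{d-1})=m$.

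Item (3) is then short: if some entry of $D$ were a unit, the corresponding row of $(0\mid D)$ would encode $h_i$ with $v(h_i)=0$, $\deg h_i\le d-m-1$, and $w(h_i(\t))=\nu$ (by the same reduceness argument as in~(2) applied to the combination producing $h_i$); Lemma~\ref{wpoly} would yield $\nu\le\delta_{d-m-1}$, contradicting the strict inequality just established. For (4), a further application of Lemma~\ref{P} with $U\in\Gamma_{\ell,d}(A)$ having $P=I_\ell$ and $Q=(-E\mid 0)$ clears the bottom block to $(0\mid F-EC)$ and shows the full resulting family $(g_{d-1},\dots,g_{d-m},h_1,\dots,h_{d-m})(\t)$ is reduced with $w$-values $(\nu,\dots,\nu,\nu_{\ell+1},\dots,\nu_d)$. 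The subset $(h_1(\t),\dots,h_{d-m}(\t))$ is therefore reduced; scaling each $h_j$ by $\pi^{-v(h_j)}$ (Lemma~\ref{scale}) yields $q'_j$ with $v(q'_j)=0$, $\deg q'_j<d-m$, and $w(q'_j(\t))=\nu_{m+j}-v(h_j)$, as required.
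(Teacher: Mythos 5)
Your proof is correct and follows essentially the same strategy as the paper: express the Hermite normal form transformation and the clearing of the $E$ block as two successive applications of Lemma~\ref{P} (the paper writes them as a single composite multiplication inside $\Gamma_{\ell,d}(A)$), establish $\delta_{d-1}=\nu$ via monicity forcing a unit coefficient, prove the multiplicity statement by the coefficient-of-$x^{d-m-1}$ contradiction through the first column of $D$, and obtain (3) from Lemma~\ref{wpoly} combined with the strict inequality $\delta_{d-m-1}<\delta_{d-1}$. The only cosmetic difference is that in the multiplicity argument the paper decomposes $g(\theta)$ directly in the $K$-basis $h_1(\theta),\dots,h_{d-m}(\theta)$ of $L_{d-m}$, whereas you decompose $g$ in the full family $q_1,\dots,q_d$ and then use the vanishing of the top $m$ coefficients to push $\alpha_1,\dots,\alpha_m$ into $\m$; your route adds one extra step but is equally valid.
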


\begin{proof}
Let $g\in A[x]$ be a monic polynomial of degree $d-1$ such that $w(g(\t))=\delta_{d-1}$. By item (2) of Lemma \ref{flag}, we can write $g=\sum_{j=1}^da_jq_j$ with $a_j\in K$. By reduceness,
$$
\delta_{d-1}=w(g(\t))=\mn\{v(a_j)+\nu_j\mid 1\le j\le d\}.
$$
By Lemma \ref{flag}, $\nu_j\le \delta_{d-1}$ for all $j$, so that $\nu\le \delta_{d-1}$. We have,
\begin{equation}\label{chain}
\nu_j\le\delta_{d-1}\le v(a_j)+\nu_j,\quad 1\le j\le d.
\end{equation}
This implies $v(a_j)\ge0$ for all $j$.
Since $g$ is monic, we have necessarily $v(a_{j_0})=0$ for some index $j_0$. From (\ref{chain}) we deduce $\delta_{d-1}= \nu_{j_0}$, and this implies that $\delta_{d-1}=\nu$.

Consider the following transformation of the matrix $T$ by multiplication on the left by a matrix in the group $\Gamma_{\ell,d}(A)$:
$$
\as{1.2}\left(\begin{array}{c|c}
P&0\\\hline 0&I_{d-\ell}\end{array}\right)\left(\begin{array}{c}
T_{\op{up}}\\\hline T_{\op{down}}\end{array}\right)=\left(\begin{array}{c}
PT_{\op{up}}\\\hline T_{\op{down}}\end{array}\right)=
\left(\begin{array}{c|c}
I_m&\qquad C\qquad \\\hline 0&\qquad D\qquad \\\hline E&\qquad F\qquad \end{array}\right),
$$
where $P\in\gl{\ell}{A}$ reflects the Gaussian elimination transformations that were applied to compute the Hermite normal form of $T_{\op{up}}$.

Now, let $E'=(E\mid 0)$, where $0$ indicates the null matrix in $A^{(d-\ell)\times (\ell-m)}$. We apply a further transformation by an element in $\Gamma_{\ell,d}(A)$:
$$
\as{1.2}\left(\begin{array}{c|c}
I_\ell&0\\\hline -E'&I_{d-\ell}\end{array}\right)
\left(\begin{array}{c|c}
I_m&\qquad C\qquad \\\hline 0&\qquad D\qquad \\\hline E&\qquad F\qquad \end{array}\right)=
\left(\begin{array}{c|c}
I_m&\quad\ C\quad\ \\\hline 0&\quad D\quad \\\hline 0& F-EC \end{array}\right)=:M.
$$

Let $g_{d-1},\dots,g_{d-m},h_1,\dots,h_{d-m}$ be the polynomials encoded by the rows of $M$. By Lemma \ref{P}, $g_{d-1}(\t),\dots,g_{d-m}(\t),h_1(\t),\dots,h_{d-m}(\t)$ is a reduced family with sequence of $w$-values $\nu_1,\dots,\nu_d$. This proves (2) and (4). 

In order to prove that $m=m(\delta_{d-1})$, it suffices to show that a monic polynomial $g\in A[x]$ of degree $d-m-1$ has necessarily $w(g(\t))<\delta_{d-1}$. 
Suppose $w(g(\t))\ge \delta_{d-1}$ for such a polynomial $g$, and let us show that this leads to a contradiction.

Since 
 $h_1(\t),\dots,h_{d-m}(\t)\in L_{d-m}$ are $K$-linearly independent elements (by reduceness), they form a $K$-basis of $L_{d-m}$. Hence, we may write 
 $$
 g(\t)=a_1h_1(\t)+\cdots +a_{d-m}h_{d-m}(\t), \quad a_1,\dots,a_{d-m}\in K.
 $$
 By reduceness, 
 $$
\delta_{d-1}\le w(g(\t))\le v(a_j)+\nu_{m+j},\quad 1\le j\le d-m.
$$
This implies $w(a_j)\ge0$ for all $1\le j\le \ell-m$ (because $\nu_{m+j}=\nu=\delta_{d-1}$), and 
 $w(a_j)>0$ for all $\ell-m< j\le d-m$ (because $\nu_{m+j}<\nu=\delta_{d-1}$). On the other hand, the coefficients of degree $d-m-1$ of $h_1,\dots,h_{\ell-m}$ belong to $\m$, because they form the first column of $D$. Hence, the leading coefficient of $g$ belongs to $\m$, and this contradicts the fact that $g$ is monic. This ends the proof of item (1).
 
 Item (3) is a consequence of Lemma \ref{wpoly}. If $q\in A[x]$ is the polynomial encoded by any row of $D$, we have $\delta_{d-1}=w(q(\t))\le v(q)+\delta_{d_0}$ for some $d_0\le \deg(q)<d-m$. We deduce that $v(q)\ge \delta_{d-1}-\delta_{d_0}>0$. Hence, all coefficients of $q$ belong to $\m$.  
\end{proof}

\section{Reduced normal form}\label{secRNF}
Let $\ut$ be the unitriangular group; that is, the subgroup of $\gl{n}{A}$ of all upper triangular matrices with $1$'s at the diagonal.

The triangulation procedure of section \ref{secTriang} computes a matrix $T=(t_{ij})\in \ut$ whose rows encode a family of monic polynomials $g_{n-1},\dots,g_0$ such that  $g_{n-1}(\t),\dots,g_0(\t)$ attain the maximal $w$-values $\delta_{n-1},\dots,\delta_0$.

The aim of this section is to apply further simplifications to the entries above the main diagonal of $T$ to obtain a canonical form, still encoding a family of polynomials attaining the maximal $w$-values. By Theorem \ref{reducedtriang}, this is the only condition we need to ensure that 
\begin{equation}\label{btr}
\bb=\{1,\,g_1(\t)/\pi^{\lfloor\delta_1\rfloor},\dots,\,g_{n-1}(\t)/\pi^{\lfloor\delta_{n-1}\rfloor}\}
\end{equation}
is still a triangular reduced integral basis. 

For each positive integer $d$ choose $\rr_d\subset A$ a set of representatives of the classes modulo $\m^d$. 

\begin{definition}
A triangular reduced integral basis $\bb$ as in (\ref{btr}) is said to be in \emph{reduced normal form} (RNF) if the matrix $T=(t_{ij})\in \ut$ whose rows encode the family $g_{n-1},\dots,g_0$ satisfies $t_{ij}\in\rr_{\lceil \delta_{n-i}-\delta_{n-j}\rceil}$ for all $i<j$. In this case, we also say that the matrix $T$ is in RNF.
\end{definition}

The condition for $\bb$ to be in HNF is $t_{ij}\in\rr_{\lfloor \delta_{n-i}\rfloor-\lfloor\delta_{n-j}\rfloor}$ for all $i<j$. For each pair of indices $1\le i<j\le n$, we have $\lfloor \delta_{n-i}\rfloor-\lfloor\delta_{n-j}\rfloor\le \lceil \delta_{n-i}-\delta_{n-j}\rceil$, and equality holds if and only if $\delta_{n-i}-\delta_{n-j}\in\Z$.  

Therefore, for the pairs of indices $i<j$ such that $\delta_{n-i}-\delta_{n-j}\not\in\Z$ the condition on $t_{ij}$ for $T$ to be in RNF is weaker than the condition for $T$ to be in HNF.

\begin{lemma}\label{preserve}
For $i<j$ and $a\in A$, consider the monic polynomial $g=g_j-ag_i\in A[x]$ of degree $j$. Then, $g$ keeps the maximal $w$-value $w(g(\t))=\delta_j$ if and only if 
$v(a)\ge\delta_j-\delta_i$.
\end{lemma}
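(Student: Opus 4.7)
The plan is to write $g(\theta) = g_j(\theta) - a\,g_i(\theta)$ and simply apply the three basic properties of the quasi-valuation $w$ listed after the definition in Section \ref{secRed}, together with the maximality built into Definition \ref{optimal}.

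First I would note that $g = g_j - a g_i$ is indeed monic of degree $j$ (since $\deg(g_i) = i < j = \deg(g_j)$ and $g_j$ is monic), so the comparison with $\delta_j$ makes sense: by definition of $\delta_j$, we have $w(g(\theta)) \le \delta_j$ automatically. What needs to be characterized is when equality holds.

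Next, by hypothesis $w(g_j(\theta)) = \delta_j$ and $w(g_i(\theta)) = \delta_i$, so by property (2) we have $w(a\,g_i(\theta)) = v(a) + \delta_i$. I would split into the two cases that arise from property (3):
\begin{itemize}
\item If $v(a) + \delta_i > \delta_j$ (which includes the convention $a=0$, $v(a)=\infty$), then the two summands have distinct $w$-values and property (3) yields $w(g(\theta)) = \min\{\delta_j,\,v(a)+\delta_i\} = \delta_j$.
\item If $v(a) + \delta_i = \delta_j$, then property (3) only gives $w(g(\theta)) \ge \delta_j$, but combined with the upper bound $w(g(\theta)) \le \delta_j$ coming from the maximality of $\delta_j$, we again obtain equality.
\item If $v(a) + \delta_i < \delta_j$, then the two summands have distinct $w$-values, and property (3) gives $w(g(\theta)) = v(a) + \delta_i < \delta_j$, so the maximal value is \emph{not} preserved.
\end{itemize}

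Combining the three cases, $w(g(\theta)) = \delta_j$ precisely when $v(a) + \delta_i \ge \delta_j$, i.e.\ $v(a) \ge \delta_j - \delta_i$, which is the claim. There is no real obstacle here; the only mild point worth being careful about is that $v(a) \in \Z_{\ge 0}\cup\{\infty\}$ while $\delta_j - \delta_i$ may be a non-integer rational, so the inequality $v(a) \ge \delta_j - \delta_i$ is actually equivalent to $v(a) \ge \lceil \delta_j - \delta_i\rceil$ — this observation is what makes the lemma useful in the subsequent discussion of RNF, where ceilings of differences $\delta_{n-i} - \delta_{n-j}$ appear.
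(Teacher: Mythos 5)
Your proof is correct, but it takes a genuinely different route from the paper's. The paper's entire proof is a single line: since $g_0(\t),\dots,g_{n-1}(\t)$ form a reduced family (by Theorem \ref{reducedtriang} together with Lemma \ref{scale}), the defining equality (\ref{reduceness}) gives $w(g(\t)) = \mn\{\delta_j,\, v(a)+\delta_i\}$ at once, with no case distinction, and the claim follows immediately. You instead work only from the three elementary axioms of the quasi-valuation $w$ and the maximality built into Definition \ref{optimal}; the price is a three-way case split, and in the tied case $v(a)+\delta_i = \delta_j$ (which occurs only when $\delta_j - \delta_i \in \Z$) property (3) alone gives merely $w(g(\t)) \ge \delta_j$, so you correctly patch the argument with the upper bound $w(g(\t)) \le \delta_j$ coming from the definition of $\delta_j$ applied to the monic degree-$j$ polynomial $g$. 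Your version is more self-contained --- it does not lean on the reducedness theorem --- while the paper's is shorter given that machinery is already in place. Your closing remark about $v(a) \ge \delta_j - \delta_i$ being equivalent to $v(a) \ge \lceil \delta_j - \delta_i \rceil$ (because $v(a) \in \Z \cup \{\infty\}$) is accurate and is indeed exactly why ceilings appear in the definition of RNF.
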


\begin{proof}
By reduceness, $w(g(\t))=\mn\{\delta_j,v(a)+\delta_i\}$. 
\end{proof}

Lemma \ref{preserve} shows that there is a unique triangular reduced integral basis of $L/K$ in RNF, for a given defining polynomial $f$ of the extension $L/K$. Also, this lemma provides a concrete procedure to compute the RNF once a triangular reduced integral basis is given. 

Actually, for a triangular reduced basis obtained by the triangulation routine of section \ref{secTriang}, we may use a blockwise procedure to obtain the RNF.

Let $\rho_1>\cdots>\rho_r$ be the ordered sequence of pairwise different elements in the multiset $\Delta=\{\delta_0,\dots,\delta_{n-1}\}$. Let $m_1,\dots,m_r$ be the corresponding multiplicities, so that $\Delta=\{\rho_1^{m_1},\dots,\rho_r^{m_r}\}$. 

Suppose that $T\in\ut$ encodes the numerators of a triangular reduced integral basis obtained by the triangulation procedure of section \ref{secTriang}. Let  
$T=(T_{ij})_{1\le i,j\le r}$ be the block decomposition of $T$ induced by the partition $n=m_1+\cdots+m_r$. Note that $T_{ii}=I_{m_i}$ for all $i$, and $T_{ij}=0$ for all $i>j$.\medskip

\noindent{\bf RNF routine}\vskip 0.1cm

\noindent{\bf Input.} $T=(T_{ij})_{1\le i,j\le r}\in\ut$ and the list $\rho_1>\dots>\rho_r$ of maximal $w$-values.\medskip

1. for $i=1$ to $r-1$ do

2. \qquad for $j=i+1$ to $r$ do

3. \qquad\qquad express $T_{ij}=C+\pi^{\lceil \rho_i-\rho_j\rceil}D$, with $C\in \rr_{\lceil \rho_i-\rho_j\rceil}^{m_i\times m_j}$

4. \qquad\qquad for $k=j$ to $r$ do

5. \qquad\qquad\qquad $T_{ik}\leftarrow T_{ik}-\pi^{\lceil \rho_i-\rho_j\rceil}DT_{jk}$ \medskip
 
\noindent{\bf Output.} A matrix $T\in\ut$ in RNF.

\section{Computational implications. An example}\label{secComp}
In this section, we discuss the practical computation of integral bases in reduced normal form, and we exhibit an example.

Let $A_v\subset K_v$ be the completion of $A\subset K$ with respect to the $v$-adic topology. Let us still denote by $v\colon \overline{K}_v \to \Q\cup\{\infty\}$ the canonical (non-discrete) extension of the valuation $v$ to a fixed algebraic closure of $K_v$.

Let $f=F_1\cdots F_t$ be the factorization of $f$ into a product of irreducible factors in $A_v[x]$. These factors $F_1,\dots, F_t$ are in 1-1 correspondence with the extensions $w_1,\dots,w_t$ of $v$ to $L$. In fact, each $F_i$ determines a finite field extension $L_i/K_v$, and the field $L$ may be embedded into $L_i$ by sending $\t$ to a root of $F_i$ in $ \overline{K}_v$. The valuation $w_i$ is obtained as the composition 
$$w_i\colon L\hookrightarrow L_i\hookrightarrow \overline{K}_v\stackrel{v}\lra \Q\cup\{\infty\}.$$

The \emph{method of the quotients} introduced in \cite{bases} computes a reduced integral basis as a by-product of the Montes algorithm \cite{algo,GMN}, which is a kind of polynomial factorization routine over $A_v[x]$. 

Bauch \cite{bauch1} and Stainsby \cite{hayden}, found independent algorithms, called \emph{multipliers} and \emph{MaxMin} respectively, which compute reduced integral bases as an application of the Montes algorithm in combination with the Single Factor Lifting algorithm (SFL) \cite{SFL}. The MaxMin algorithm has the advantage of computing directly triangular reduced integral bases. 

For each irreducible factor $F_i$, the Montes algorithm computes 
a family of monic polynomials $\phi_1,\dots,\phi_r,\phi_{r+1}\in A[x]$, where $r$ is the \emph{Okutsu depth} of $F_i$, such that the list $[\phi_1,\dots,\phi_r]$ is an \emph{Okutsu frame} of $F_i$, and $\phi_{r+1}$ is an \emph{Okutsu approximation} to $F_i$ (cf. \cite{Ok,okutsu}). This means that $\phi_{r+1}$ is ``sufficiently close" to $F_i$ in the $v$-adic topology.  

If $m_\ell=\deg\phi_\ell$ for $1\le \ell\le r+1$, then
$$
0<m_1<\cdots<m_r<m_{r+1}=\deg (F_i),\quad m_1\mid\cdots \mid m_{r+1}.
$$

The Okutsu frame $[\phi_1,\dots,\phi_r]$ is a family of polynomials with maximal $w_i$-values according to their degree. More precisely, for any monic polynomial $g\in A[x]$ we have
$$
\deg(g)<m_{\ell+1}\mbox{ for } 0\le\ell\le r\ \Longrightarrow\ \dfrac{w_i(g(\t))}{\deg(g)}\le \dfrac{w_i(\phi_\ell(\t))}{m_\ell}, 
$$
where we agree that $\phi_0=1$. The polynomials in the Okutsu frames of all factors $F_1\dots,F_t$ will be simply called \emph{$\phi$-polynomials}.


We may summarize two methods for the computation of integral bases in reduced normal form as follows.\medskip

\noindent{\bf Quotients}\vskip 0.1cm\it

(Q1) Apply the Montes algorithm to compute an Okutsu frame of each $F_i$, but skip the computation of the Okutsu approximations.\vskip 0.1cm

(Q2) Compute an $n$-reduced polynomial family $q_1,\dots,q_n$ and the corresponding sequence of $w$-values $\nu_1,\dots,\nu_n$. \vskip 0.1cm

(Q3) Apply the triangulation routine of section \ref{secTriang}.\vskip 0.1cm

(Q4) Apply the RNF routine of section \ref{secRNF}.\rm\medskip

Each polynomial $q_i$ is an adequate product of the quotients of certain divisions with remainder of $f$ by powers of $\phi$-polynomials, performed (and stored) along the execution of the Montes algorithm (cf. \cite{bases}).\medskip

\noindent{\bf MaxMin}\vskip 0.1cm\it

(MM1) Apply the full Montes algorithm to compute Okutsu frames and Okutsu approximations of all $F_i$. \vskip 0.1cm

(MM2) Compute the maximal $w$-values $\delta_0,\dots,\delta_{n-1}$ and formal expressions of monic polynomials $g_0,\dots,g_{n-1}$ attaining these values, as products of $\phi$-polynomials and Okutsu approximations. \vskip 0.1cm

(MM3) Apply the SFL routine to improve the Okutsu approximations to a precision determined by the computations of (MM2).\vskip 0.1cm

(MM4) Execute the products indicated formally in (MM2) and compute the polynomials $g_0,\dots,g_{n-1}$, yielding a triangular reduced integral basis.\vskip 0.1cm

(MM5) Apply the RNF routine of section \ref{secRNF}.\rm\medskip

Steps (Q2) and (MM4) have the same cost: $O(n)$ multiplications in $A[\t]$.
Steps (Q4) and (MM5) have the same cost too. On the other hand, step (MM2) has a negligible cost.  

Therefore, in  order to compare the computational performance of the two methods, we must compare the cost of the triangulation routine (Q3) with the extra tasks of MaxMin: computation of the Okutsu approximations (part of (MM1)) and their improvements (MM3). 

Now, the complexity of the steps (MM1)-(MM4) \cite[Thm. 3.5]{hayden} is lower than the complexity of Gaussian elimination, which requires $O(n^3)$ multiplications in $A$. 

Thus, for $n$ large, MaxMin is much faster than Quotients, or the similar algorithm resulting from the use of the multipliers method. For $n$ of a moderate size, say $n<100$, the two methods have a similar performance for randomly chosen inputs. Therefore, we may conclude that MaxMin is the reasonable choice as a prototype algorithm for the computation of integral bases in RNF.

Also, MaxMin and Multipliers have the advantage of being able to compute reduced bases of fractional ideals of $B$, while Quotients is only able to compute the maximal order $B$ itself.

\subsection{An example}
We end this section with a concrete example.

Let $A=\Z_{(2)}$ be the localization of $\Z$ at the prime ideal $2\Z$. Thus, $K=\Q$ and the valuation $v$ of $A$ is the ordinary $2$-adic valuation. For each positive integer $d$, let us take $\rr_d=(-2^{d-1},2^{d-1}]\cap \Z$.

Consider the number field $L=\Q(\t)$, where $\t$ is a root of the monic irreducible polynomial:
$$
f=x^8-x^7+21x^6-20x^5-368x^4+388x^3-516x^2+128x+128\in A[x].
$$

The Montes algorithm determines the prime ideal decomposition: 
$$
2B=\p_1^4\,\p_2^2\,\p_3,\quad f_1=f(\p_1/2)=f_2=f(\p_2/2)=1,\ f_3=f(\p_3/2)=2,
$$
so that  $f=F_1F_2F_3$ has three irreducible factors in $\Z_2[x]$. The algorithm finds the following Okutsu frames and Okutsu approximations, too:
$$
\begin{array}{cl}
\mbox{$[x,\,x^2+2x+2]$},&\phi_{\p_1}=x^4+4x^3+8x^2+16x+4\approx F_1,\\
\mbox{$[ x ]$}, & \phi_{\p_2}=x^2+32\approx F_2, \\ 
\mbox{[\ ]}, & \phi_{\p_3}=x^2+x+1\approx F_3.
\end{array}
$$
The irreducible factor $F_3$ is irreducible modulo $2$. Hence, it has Okutsu depth zero and its Okutsu frame is an empty list.

The reduced integral basis computed by the method of the quotients is:
$$
T_{\op{Quotients}}=
\begin{pmatrix}
1&31&21&12&16&4&28&0\\
9&7&11&2&14&4&0&0\\
0&1&5&1&0&6&0&0\\
0&1&7&5&4&0&4&4\\
1&2&3&2&1&0&0&0\\
0&0&0&1&3&1&0&0\\
0&1&0&0&0&0&0&0\\
1&0&0&0&0&0&0&0
\end{pmatrix},
\qquad
\vec{\nu}=\begin{pmatrix}
9/2\\
13/4\\
11/4\\
2\\
3/2\\
1\\
0\\
0
\end{pmatrix}.
$$
This matrix $T_{\op{Quotients}}$ encodes a family of polynomials $q_1,\dots,q_8\in A[x]$ as indicated in (\ref{encoding}). The column $\vec{\nu}$ contains the corresponding sequence of $w$-values:  $\nu_1=w(q_1(\t)),\dots,\nu_8=w(q_8(\t))$. Recall that the corresponding reduced integral basis is 
$$
\bb=\left\{q_1(\t)/2^{\lfloor\nu_1\rfloor},\dots,q_8(\t)/2^{\lfloor\nu_8\rfloor}\right\}.
$$
In agreement with Theorem \ref{distribution}, $w(\bb)=\left\{0^4,(1/2)^2,(1/4)^1,(3/4)^1\right\}$.

The triangulation procedure of section \ref{secTriang} consists of five
 triangulation steps. In the intermediate steps, the vector $\vec{\nu}$ of $w$-values takes the following values (after reordering):
 $$
 \begin{array}{l}
 \vec{\nu}=[9/2,\,11/4,\,9/4,\,2,\,3/2,\,1,\,0,\,0],\\
 \vec{\nu}=[9/2,\,11/4,\,9/4,\,3/2,\,1,\,1,\,0,\,0],\\
 \vec{\nu}=[9/2,\,11/4,\,9/4,\,1,\,1,\,1/2,\,0,\,0].
 \end{array}
$$
The final upper triangular matrix is:
$$
T_{\op{triang}}=
\begin{pmatrix}
1&-1&-11&12&16&4&-4&0\\
0&1&-3&1&0&-2&0&0\\
0&0&1&-3&1&0&-2&0\\
0&0&0&1&1&1&0&0\\
0&0&0&0&1&1&1&0\\
0&0&0&0&0&1&0&0\\
0&0&0&0&0&0&1&0\\
0&0&0&0&0&0&0&1\\
\end{pmatrix},
\qquad
\vec{\nu}=\begin{pmatrix}
9/2\\
11/4\\
9/4\\
1\\
1/2\\
0\\
0\\
0
\end{pmatrix}.
$$
The vector $\vec{\nu}$ describes the canonical maximal $w$-values:
$$
\delta_0=\delta_1=\delta_2=0,\ \delta_3=1/2,\ \delta_4=1,\ \delta_5=9/4,\ \delta_6=11/4,\ \delta_7=9/2.
$$

The RNF routine of section \ref{secRNF} leads to:
$$
T_{\op{RNF}}=
\begin{pmatrix}
1&-1&-3&4&8&-12&12&0\\
0&1&1&1&0&2&0&0\\
0&0&1&1&1&0&2&0\\
0&0&0&1&1&1&0&0\\
0&0&0&0&1&1&1&0\\
0&0&0&0&0&1&0&0\\
0&0&0&0&0&0&1&0\\
0&0&0&0&0&0&0&1\\
\end{pmatrix},
\qquad
\vec{\nu}=\begin{pmatrix}
9/2\\
11/4\\
9/4\\
1\\
1/2\\
0\\
0\\
0
\end{pmatrix}.
$$

On the other hand, the basis in Hermite Normal Form would be:
$$
T_{\op{HNF}}=
\begin{pmatrix}
1&3&1&0&0&4&12&0\\
0&1&0&0&3&2&2&0\\
0&0&1&1&1&0&2&0\\
0&0&0&1&1&1&0&0\\
0&0&0&0&1&0&0&0\\
0&0&0&0&0&1&0&0\\
0&0&0&0&0&0&1&0\\
0&0&0&0&0&0&0&1\\
\end{pmatrix},
\qquad
\vec{\nu}=\begin{pmatrix}
4\\
2\\
9/4\\
1\\
0\\
0\\
0\\
0
\end{pmatrix}.
$$
This corresponds to a simpler basis indeed. However, since the $w$-values are not the maximal ones, this basis is not reduced, by Theorem \ref{reducedtriang}.

Finally, let us illustrate the computation of $T_{\op{RNF}}$ by the MaxMin algorithm.
For $\alpha\in L$, let us denote:
$$
\vec{w}(\alpha)=(w_1(\alpha),w_2(\alpha),w_3(\alpha)).
$$
Along the execution of the Montes algorithm, we compute and store the $w$-vectors of all $\phi$-polynomials and Okutsu approximations:\medskip

\as{1.2}
\begin{tabular}{|c|c|c|c|c|c|}
\hline
$\phi$&$x$&$x^2+2x+2$&$\phi_{\p_1}$&$\phi_{\p_2}$&$\phi_{\p_3}$\\
\hline
$\vec{w}(\phi(\t))$&$(1/2,5/2,0)$&$(7/4,1,0)$&$(\infty,2,0)$&$(1,\infty,0)$&$(0,0,\infty)$\\
\hline
\end{tabular}\medskip
\as{1}

The coordinates with value $\infty$ just indicate that the values $w_i(\phi_{\p_i})$ for $i=1,2,3$ can become arbitrarily large for a proper improvement of the Okutsu approximations with the SFL algorithm, while the values $w_i(\phi_{\p_j})$ remain constant for $i\ne j$.

With this information at hand, the MaxMin algorithm constructs monic polynomials $g_0,\dots,g_7$ of degree $0,\dots,7$ attaining the maximal $w$-values. By \cite[Thm. 2.6]{hayden} these polynomials may be obtained as adequate products of $\phi$-polynomials and Okutsu approximations. After a very simple search \cite[Thm. 3.3]{hayden}, we take:
\begin{equation}\label{gs}
\begin{array}{lll}
g_0=1,& g_3=x\,\phi_{\p_3},& g_6=x^2(x^2+2x+2)\,\phi_{\p_3}, \\
g_1=x,& g_4=(x^2+2x+2)\,\phi_{\p_3},& g_7=x\,\phi_{\p_1}\phi_{\p_3}.\\
g_2=x^2,&g_5=x(x^2+2x+2)\,\phi_{\p_3},&
\end{array}
\end{equation}
giving rise directly to the sequence of canonical $w$-values:
$$
\delta_0=\delta_1=\delta_2=0,\ \delta_3=1/2,\ \delta_4=1,\ \delta_5=9/4,\ \delta_6=11/4,\ \delta_7=9/2.
$$

In order to have $w(g_j(\t))=\delta_j$ for all $0\le j<8$, the conditions $w_1(\phi_{\p_1}(\t))=\infty$,  $w_3(\phi_{\p_3}(\t))=\infty$ may be replaced by:
$$
w_1(\phi_{\p_1}(\t))\ge 4,\quad w_3(\phi_{\p_3}(\t))\ge 9/2.
$$
For the  concrete choices for the Okutsu approximations provided by the Montes algorithm we have $w_1(\phi_{\p_1}(\t))=15/4$ and $w_3(\phi_{\p_3}(\t))=1$, which is not enough for our purposes. A single iteration of the SFL algorithm for each factor yields the right improvements: 
$$
\begin{array}{ll}
\phi_{\p_1}=x^4+32x^3+52x^2+48x+28,& w_1(\phi_{\p_1})=9/2\ge 4,\\
\phi_{\p_3}=x^2-x+1,& w_3(\phi_{\p_3})=8\ge 9/2.
\end{array}
$$
Now, we may execute the computation (\ref{gs}) of $g_0,\dots,g_7$ with these concrete values of $\phi_{\p_1}$, $\phi_{\p_3}$ provided by the SFL algorithm. 
In this way, we obtain a triangular matrix:
$$
T_{\op{MaxMin}}=
\begin{pmatrix}
1&31&21&28&32&20&28&0\\
0&1&1&1&0&2&0&0\\
0&0&1&1&1&0&2&0\\
0&0&0&1&1&1&0&2\\
0&0&0&0&1&-1&1&0\\
0&0&0&0&0&1&0&0\\
0&0&0&0&0&0&1&0\\
0&0&0&0&0&0&0&1\\
\end{pmatrix},
\qquad
\vec{\nu}=\begin{pmatrix}
9/2\\
11/4\\
9/4\\
1\\
1/2\\
0\\
0\\
0
\end{pmatrix},
$$
which yields the canonical matrix $T_{\op{RNF}}$ after applying the RNF routine of section \ref{secRNF}.

\end{document}